\theoremstyle{plain}
\newtheorem{theorem}{Theorem}[section]
\newtheorem{lemma}[theorem]{Lemma}
\newtheorem{proposition}[theorem]{Proposition}
\newtheorem{definition}[theorem]{Definition}
\newtheorem{remark}[theorem]{Remark}
\newcommand{\E}{{\mathbf E}}
\newcommand{\B}{{\mathbf H}}
\def \RR {\mathbb{R}}
\def \x {{\mathbf x}}
\def \u {{\mathbf u}}
\def \v {{\mathbf v}}
\def \m {{\mathbf m}}
\def \T {\textbf{T}(\partial \Omega)}
\newcommand{\Hcurl}{\textbf{H}\left(\text{curl},\Omega\right)}
\newcommand{\iu}{{i\mkern1mu}}
\newtheorem{example}[theorem]{Example}
\newcommand\BibTeX{{\rmfamily B\kern-.05em \textsc{i\kern-.025em b}\kern-.08em
T\kern-.1667em\lower.7ex\hbox{E}\kern-.125emX}}
\begin{document}

\title[Quantitative estimates in Lipschitz domains]{Quantitative Trace Estimates for the Maxwell system in Lipschitz Domains}

\author{Eric Stachura}
 \address{Kennesaw State University\\
          Department of Mathematics\\
            850 Polytechnic Lane\\
            Marietta, GA 30060 USA \\}
\email{estachur@kennesaw.edu}             
 \author{Niklas Wellander}
     \address{
          Swedish Defense Research Agency, FOI\\
          Division of Command and Control Systems\\
           SE - 581 11  Link\"oping, Sweden\\}    
            \email{niklas.wellander@foi.se}
        \thanks{Corresponding author: E. Stachura}

\maketitle

\begin{abstract} We develop various quantitative estimates for the anisotropic Maxwell system in Lipschitz domains, with a focus on how the estimates precisely depend on the Lipschitz character of the domain. We pay special attention to trace operators and extension operators over certain Sobolev spaces. Finally, we provide a weak formulation of the interior scattering problem in terms of the exterior Calder\'on operator, and provide explicit bounds for the solution of the interior problem in terms of the incident fields and the Lipschitz character of the domain.
\end{abstract}

\smallskip

\noindent \textbf{Keywords.} Trace Theorems; Maxwell Equations; Lipschitz domains; Scattered fields; Electromagnetic Scattering

\section{Introduction}
Estimates of scattered fields in terms of incident fields play an important role in controlling how much energy an obstacle scatters. In \cite{NWGK2014} this problem was addressed quite explicitly, and a number of such estimates were obtained on a domain with $C^{1,1}$ boundary. Here we instead consider the case when the domain is rougher; namely, the domains under consideration here are Lipschitz. 
 
 The effects of rough surfaces on scattering are important for many applications, including radar surveillance, remote sensing, synthetic aperture radar imaging and radio communication, see e.g. \cite{Beckmann1987} and \cite{Chinese2013}.
 
 It is our motivation to study precisely \emph{how} the rough surface effects the various scattering problems. We develop quantitative estimates for various trace operators, which are used in turn to prove existence and uniqueness of solutions to an interior problem for the Maxwell system. The previous trace estimates are then used to establish an $\Hcurl$ estimate of the solution of the interior problem, where the constant is obtained explicitly in terms of the Lipschitz nature of the domain. Indeed, we obtain the following explicit bound of the field $\E$ in terms of the incident fields:

 \begin{theorem}\label{main1}
 Any solution $\E\in \Hcurl$ of (\ref{weakformulation}) on a bounded, Lipschitz domain in $\mathbb{R}^3$ satisfies
 \begin{align*}
||\E||_{\textbf{H}(\text{curl},\Omega)} \leq \dfrac{(1+\sqrt{2}) (M\;k_1)^2}{\min(C_0, \widetilde{C_0})}||C^e||_{\T} \left( ||\B_i||_{\textbf{H}(\text{curl},\Omega)} + ||\E_i||_{\textbf{H}(\text{curl},\Omega)}\right),
\end{align*}
where the constants $C_0, \widetilde{C_0}$ depend on the material parameters of the scatterer, $k_1$ is a constant given by (\ref{k1def}), and $M$ is the Lipschitz constant of the domain.
 \end{theorem}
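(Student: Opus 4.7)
The plan is to derive the bound directly from the weak formulation (\ref{weakformulation}) by testing it against $\E$ itself and then combining coercivity of the interior bilinear form with the quantitative trace estimates established earlier in the paper. First I would substitute $\Psi = \E$ as the test function in (\ref{weakformulation}). Under the material assumptions encoded by $C_0$ and $\widetilde{C_0}$, the interior (volume) part of the sesquilinear form should produce a lower bound of the form $\min(C_0,\widetilde{C_0})\,\|\E\|_{\Hcurl}^{2}$, which is exactly what delivers the denominator of the stated bound. Any contribution from the boundary Calder\'on term on the left-hand side will be moved to the right-hand side so it can be estimated together with the incident-field data.

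Next I would bound the resulting boundary terms, which involve the exterior Calder\'on operator $C^e$ paired against the traces of $\E$, $\E_i$, and $\B_i$. Each boundary pairing is controlled by the operator norm $\|C^e\|_{\T}$ and the $\T$-norms of the tangential traces. The traces are then pulled back into the interior using the quantitative trace inequality proved earlier, whose constant on a Lipschitz domain is $M\,k_1$ with $M$ the Lipschitz constant and $k_1$ given by (\ref{k1def}). One application falls on the trace of $\E$ and another on the traces of the incident fields (or on the Calder\'on-operator composition thereof), which together produce the $(M\,k_1)^{2}$ in the numerator. Collecting the two incident contributions then yields the factor $\|C^e\|_{\T}\bigl(\|\B_i\|_{\Hcurl}+\|\E_i\|_{\Hcurl}\bigr)$.

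The factor $1+\sqrt{2}$ should emerge in the combination step. A natural source is splitting the contribution of $\E$ on the left-hand side: the coercive interior piece gives $\|\E\|_{\Hcurl}^{2}$, while a boundary pairing involving $\E$ is dominated by a further $\sqrt{2}$ times $\|\E\|_{\Hcurl}^{2}$ after applying the trace bound together with the elementary inequality $\|u\|_{L^{2}}+\|\nabla\times u\|_{L^{2}}\le \sqrt{2}\,\|u\|_{\Hcurl}$. Adding the bare interior coefficient to this boundary contribution produces the constant $1+\sqrt{2}$ before one divides through by $\|\E\|_{\Hcurl}$ to extract the final inequality.

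The main obstacle I anticipate is not any single estimate but the careful bookkeeping of constants so that the stated explicit dependence on $M$, $k_1$, $C_0$, and $\widetilde{C_0}$ is sharp and isolated. In particular, I must verify that the coercivity constant is genuinely $\min(C_0,\widetilde{C_0})$ without absorbing hidden boundary geometric factors, that each trace or extension step contributes only the sharp $M\,k_1$ rather than some weaker derived bound, and that the sesquilinearity of the Calder\'on boundary term (together with the interplay between the tangential trace $\gamma_{t}$ and the twisted tangential trace $\gamma_{T}$) does not secretly inject additional constants when $\Psi = \E$ is chosen. Threading these constants through the earlier lemmas is the technical heart of the argument.
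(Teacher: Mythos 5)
Your overall skeleton (test with $\E$, use coercivity with constant $\min(C_0,\widetilde{C_0})$, bound the incident-field functional via $\|C^e\|_{\T}$ and the quantitative trace constants) matches the paper, but there are two genuine gaps. First, your treatment of the Calder\'on boundary term on the left-hand side would fail: you propose to move $\int_{\partial\Omega}\overline{\pi_-(\E)}\cdot C^e(\gamma_t^-(\E))\,\mathrm{d}\sigma$ to the right and estimate it ``together with the incident-field data,'' but by the trace bounds this term is of size $\|C^e\|_{\T}\,(Mk_1)\,(1+\sqrt{2})Mk_1\,\|\E\|_{\Hcurl}^2$, i.e.\ comparable to (and in general much larger than) the coercive term $\min(C_0,\widetilde{C_0})\|\E\|_{\Hcurl}^2$, so it cannot be absorbed and no bound on $\|\E\|_{\Hcurl}$ follows. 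The paper never estimates this term: it uses the positivity of the exterior Calder\'on operator, \eqref{positivity} and its consequence \eqref{positivity2}, so that the boundary term enters $\Re A(\E,\E)$ with a favorable sign and is simply dropped; the coercivity estimate \eqref{coerciveEstimate} then holds for the full form $A$, with the boundary geometry playing no role in $\min(C_0,\widetilde{C_0})$. Without invoking this positivity your argument does not close.

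Second, your proposed mechanism for the factor $1+\sqrt{2}$ (adding the ``bare interior coefficient'' $1$ to a boundary contribution of size $\sqrt{2}$ coming from $\|u\|_{L^2}+\|\nabla\times u\|_{L^2}\le\sqrt{2}\|u\|_{\Hcurl}$) is not where it comes from, and as noted above any boundary pairing involving $\E$ would carry the large prefactor $\|C^e\|_{\T}(Mk_1)^2$, not a clean $\sqrt{2}$. In the paper, $1+\sqrt{2}$ is the Alonso--Valli constant $C_\Omega=(\sqrt{2}+1)C_1$ of Theorem \ref{AlonsoTheorem}, with $C_1=Mk_1$ the extension-operator norm from Proposition \ref{prop:extension}; it appears when the data norms $\|\gamma_t^+(\B_i)\|_{\T}$ and $\|\gamma_t^+(\E_i)\|_{\T}$ produced by bounding the right-hand functional are converted into $\|\B_i\|_{\Hcurl}$ and $\|\E_i\|_{\Hcurl}$. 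The other factor of $Mk_1$ in $(Mk_1)^2$ is $\max(C_\pi^1,C_\pi^2)$ from Proposition \ref{piprop} and Theorem \ref{piestimate2}, which controls $\pi_-(\v)$ for the test function in the right-hand functional (with $\v=\E$ this is divided out at the end). So the two trace constants enter exactly once each, on the data and on the test function, and neither arises from splitting the coercive left-hand side as you describe.
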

 
 The Sobolev spaces employed in this general setting are more complicated than on a smooth domain, see \cite{buffa2002} and the references therein. This is exemplified by the fact that even for $u\in C^\infty (\overline{\Omega})$, it is not true that the tangential trace of $u$ belongs to $H^{1/2}(\partial \Omega)$. Therefore, the development of precise, qualitative trace estimates is important for understanding exactly how the rough surface affects solutions of the Maxwell system, and ultimately how the scattering problem is affected. 
 
 We expect the quantitative trace estimates themselves to be of independent interest, but in particular these results will be useful for the exploration of how precisely rough surfaces effect various electromagnetic scattering problems. 
 
 The layout of this article is as follows. First in Section \ref{sec:Lipdomain} we recall the basics of Lipschitz domains. In particular, we define a Lipschitz character $\theta$ in (\ref{thetadef}) that will appear in a number of trace estimates. Section \ref{sec:traces} is devoted to proving the quantitative trace theorems. In Section \ref{sec:scattering} we consider the scattering problem. Then in Section \ref{sec:weakformulation} we develop a weak formulation of the scattering problem, prove existence of weak solutions via Lax-Milgram Theorem, and ultimately prove Theorem \ref{main1}. Finally, in Section \ref{sec:scatteredestimates} we deliver precise estimates for the scattered fields in terms of the incident fields. These estimates are known \cite{NWGK2014a}, but here we provide explicit dependence on the Lipschitz character of the domain in the resulting constants.

\section{Lipschitz Domains} \label{sec:Lipdomain}
We begin by letting $\Omega \subset \mathbb{R}^3$ be a Special Lipschitz domain, so that
\begin{align} \label{specialLipdomain}
\Omega = \left\{ (x', x_3): x_3 > \phi(x')\right\},
\end{align}
where $x'=(x_1, x_2)$ and $\phi: \mathbb{R}^2 \to \mathbb{R}$ is globally Lipschitz. Let $M$ denote the Lipschitz constant of $\phi$, i.e. 
\[ |\phi(x)-\phi(y)| \leq M |x-y|, \quad \forall \; x, y \in \mathbb{R}^2.\]
We have that boundary of $\Omega$ is 
$$\partial \Omega = \left\{ (x', x_3): x_3 = \phi(x')\right\},$$
and the unit normal $\nu$ is given by
$$\nu =\nu (x', \phi(x')) = \dfrac{1}{\sqrt{ 1+|\nabla_h \phi(x')|^2}} (\nabla_h \phi(x'), -1),$$
where the gradient $\nabla_h$ is the 2 dimensional ``horizontal" gradient. Following \cite{M2015traces} we let $\theta \in [0, \pi/2)$ be the angle defined by
\begin{align} \label{thetadef}
\theta = \arccos \left( \inf_{x' \in \mathbb{R}^2} \dfrac{1}{\sqrt{ 1+|\nabla_h \phi(x')|^2}} \right)
\end{align}
so that for $e=(0,0,1)$ we have
\begin{align}\label{verticaldirectioncondition}
-e \cdot \nu (x', \phi(x')) = \dfrac{1}{\sqrt{1+|\nabla_h \phi(x')|^2}} \geq \cos(\theta) \quad \forall \; x' \in \mathbb{R}^2.
\end{align}

\begin{example}\label{ex:wedge}
The wedge with angle $\alpha\in (0, \pi)$ can be defined as the Special Lipschitz domain
$$\Omega_{\alpha} \coloneqq \left\{(x,y)\in \mathbb{R}^2: y>\cot(\alpha/2)|x|\right\}; $$

\begin{figure}
\begin{center}
\includegraphics[scale=1.4]{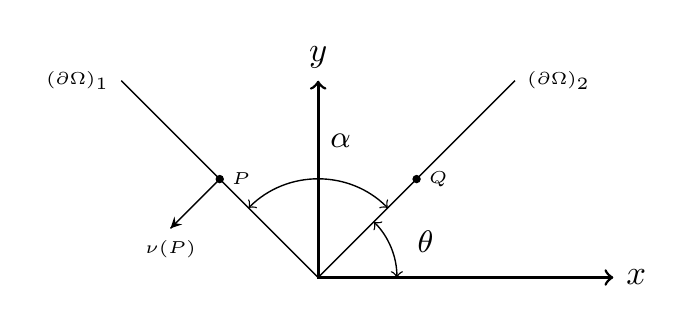}
\caption{The wedge domain $\Omega_{\alpha}$.}
\label{fig:SLD}
\end{center}
\end{figure}
\noindent see Figure \ref{fig:SLD}. The boundary of this domain is defined by the function $\phi(x)=\cot(\alpha/2)|x|$; the Lipschitz constant for this domain is $M=\cot(\alpha/2)$. In particular, for small angles $\alpha$, we see that
\[ M = M(\alpha) \approx \dfrac{2}{\alpha}. \]
Notice that for $\Omega_{\alpha}$, there holds
$$\dfrac{1}{\sqrt{ 1+|\phi'(x)|^2}}=\sin(\alpha/2)$$
so 
\[\theta= \arccos(\sin(\alpha/2)) = \dfrac{\pi}{2} - \dfrac{\alpha}{2}.
\] 
Let $(\partial \Omega)_1$ and $(\partial \Omega)_2$ denote the part of the boundary lying to the left and right of the angle $\alpha$, respectively. Then each of these pieces of $\partial \Omega$ can be parametrized:
\begin{align*}
(\partial \Omega)_1 &= \left\{ (-t\sin(\alpha/2), t\cos(\alpha/2): t\in \mathbb{R}_+\right\}, \\ (\partial \Omega)_2 &= \left\{ (t\sin(\alpha/2), t\cos(\alpha/2): t\in \mathbb{R}_+\right\}.
\end{align*}
Let $P \in (\partial \Omega)_1$ and $Q\in (\partial \Omega)_2$, so that
\[P=(-t\sin(\alpha/2), t\cos(\alpha/2))\quad \text{ and } \quad Q=(t\sin(\alpha/2), t\cos(\alpha/2)).\]
Then, simple geometric considerations imply that the normals at these points are given by
\[ \nu(P)= (-\cos(\alpha/2), -\sin(\alpha/2)) \quad \text{ and } \quad \nu(Q) = (\cos(\alpha/2), -\sin(\alpha/2))\]
so we see that the analog of (\ref{verticaldirectioncondition}) holds on both $(\partial \Omega)_1$ and $(\partial \Omega)_2$.
\end{example}

\section{Traces on Lipschitz Domains} \label{sec:traces}

The following gives precise control of the constant for the trace of an $H^1(\Omega)$ function on Special Lipschitz domains. 

\begin{theorem}[\cite{M2015traces}] \label{SMTheorem1}
Let $\Omega$ be a domain of the form (\ref{specialLipdomain}). Suppose $u \in H^1(\Omega)$. Then the trace of $u$, denoted $\left. u \right|_{\partial \Omega}$, belongs to $L^2(\partial \Omega)$ and
\begin{align} \label{traceestimate1}
||\left. u \right|_{\partial \Omega} ||^2_{L^2(\partial \Omega)} \leq \dfrac{2}{\cos(\theta)} ||u||_{L^2(\Omega)} ||\nabla u||_{L^2(\Omega; \mathbb{R}^3)},
\end{align}
where $\theta$ is defined in (\ref{thetadef}).
\end{theorem}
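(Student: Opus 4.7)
The plan is to prove the estimate first for smooth functions with appropriate decay at infinity (say, $u\in C^1(\overline{\Omega})\cap H^1(\Omega)$ whose restrictions to $\overline{\Omega}$ are uniformly Lipschitz and decay suitably as $|x|\to\infty$), and then extend to general $u\in H^1(\Omega)$ by a density argument. Because $\Omega$ is a Special Lipschitz epigraph, it is unbounded, so one should first justify that smooth, compactly supported (or Schwartz) functions restricted to $\overline{\Omega}$ are dense in $H^1(\Omega)$; this is standard for epigraphs of globally Lipschitz functions.

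The key step is a fundamental theorem of calculus argument in the vertical direction. For $u$ smooth with sufficient decay as $x_3\to\infty$, and for fixed $x'\in\mathbb{R}^2$, write
\begin{equation*}
|u(x',\phi(x'))|^2 \;=\; -\int_{\phi(x')}^{\infty} \partial_{x_3}\bigl(|u(x',x_3)|^2\bigr)\,dx_3 \;=\; -2\int_{\phi(x')}^{\infty} u(x',x_3)\,\partial_{x_3}u(x',x_3)\,dx_3,
\end{equation*}
so that by the triangle inequality,
\begin{equation*}
|u(x',\phi(x'))|^2 \;\leq\; 2\int_{\phi(x')}^{\infty} |u(x',x_3)|\,|\partial_{x_3}u(x',x_3)|\,dx_3.
\end{equation*}

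Next, I would parameterize the surface integral over $\partial\Omega$ by the graph chart $x'\mapsto(x',\phi(x'))$, whose surface element is $dS=\sqrt{1+|\nabla_h\phi(x')|^2}\,dx'$. By the definition of $\theta$ in (\ref{thetadef}) together with (\ref{verticaldirectioncondition}), I have $\sqrt{1+|\nabla_h\phi(x')|^2}\leq 1/\cos(\theta)$ uniformly in $x'$. Combining this with the pointwise bound above yields
\begin{equation*}
\|u|_{\partial\Omega}\|_{L^2(\partial\Omega)}^2 \;\leq\; \frac{2}{\cos(\theta)}\int_{\mathbb{R}^2}\int_{\phi(x')}^{\infty} |u|\,|\partial_{x_3}u|\,dx_3\,dx' \;=\; \frac{2}{\cos(\theta)}\int_{\Omega} |u|\,|\partial_{x_3}u|\,dx,
\end{equation*}
since the iterated integral on the right is precisely the integral over $\Omega$ written in Fubini form. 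A single application of Cauchy--Schwarz, followed by the trivial bound $|\partial_{x_3}u|\leq|\nabla u|$, then gives (\ref{traceestimate1}).

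Finally, I would close the argument with density: for general $u\in H^1(\Omega)$, choose smooth approximants $u_n\to u$ in $H^1(\Omega)$; the inequality for $u_n$ together with the continuity of the trace operator (or simply the fact that the right-hand side is controlled by $\|u_n\|_{H^1(\Omega)}^2$) shows that $\{u_n|_{\partial\Omega}\}$ is Cauchy in $L^2(\partial\Omega)$, whose limit is the trace of $u$, and the estimate passes to the limit. The main obstacle is really just organizing the density step cleanly on the unbounded epigraph $\Omega$; the analytical heart of the estimate is the one-line FTC computation together with the geometric inequality $\sqrt{1+|\nabla_h\phi|^2}\leq 1/\cos\theta$, which is where the Lipschitz character of the domain enters explicitly through $\theta$.
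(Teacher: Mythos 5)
Your proof is correct, and since the paper does not prove Theorem \ref{SMTheorem1} but simply cites \cite{M2015traces}, the natural comparison is with that reference: your computation is essentially the same argument, with the fundamental-theorem-of-calculus step along vertical rays plus the graph parametrization and Fubini playing the role that the divergence theorem applied to the field $|u|^2 e$, combined with the angle condition \eqref{verticaldirectioncondition}, plays there. The geometric input is identical ($\sqrt{1+|\nabla_h\phi|^2}\leq 1/\cos\theta$, equivalently $-e\cdot\nu\geq\cos\theta$), the Cauchy--Schwarz step produces the stated constant $2/\cos\theta$, and the density of restrictions of compactly supported smooth functions in $H^1$ of a Lipschitz epigraph is indeed standard, so the limiting step closes without any gap.
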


The analog of Theorem \ref{SMTheorem1} for a bounded, general Lipschitz domain is the following:

\begin{theorem}[\cite{M2015traces}] \label{SMTheorem2}
Let $\Omega$ be a bounded, Lipschitz domain. There exists a constant $C > 0$ such that for all $u \in H^1(\Omega)$, the trace of $u$ lies in $L^2(\partial \Omega)$ and
\begin{align} \label{traceestimate2}
\left|\left| \left. u \right|_{\partial \Omega} \right|\right|_{L^2(\partial \Omega)}^2 \leq \dfrac{1}{\beta} ||u||_{L^2(\Omega)} \cdot \left( 2 ||\nabla u||_{L^2(\Omega; \mathbb{R}^3)} + C ||u||_{L^2(\Omega)} \right),
\end{align}
where $\beta = \min_k \cos(\theta_k)$. 

\end{theorem}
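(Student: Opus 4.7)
The plan is to adapt the divergence-theorem argument underlying Theorem \ref{SMTheorem1} to a bounded Lipschitz domain by constructing a single smooth, globally-defined vector field that is transverse to $\partial \Omega$ everywhere, playing the role that $-e$ played in the special Lipschitz case. Using the compactness of $\partial \Omega$, I would first cover $\partial \Omega$ by finitely many open balls $B_1,\dots,B_N$ such that on each $B_k$, after a rigid motion with rotation $R_k$ and translation, $\Omega \cap B_k$ coincides with the epigraph of a globally Lipschitz function $\phi_k$ satisfying the analog of (\ref{verticaldirectioncondition}) with angle $\theta_k$. Choose a smooth partition of unity $\{\chi_k\}_{k=1}^N$ subordinate to $\{B_k\}$ with $\sum_k \chi_k = 1$ on a neighborhood of $\partial \Omega$ and each $\chi_k\in C_c^\infty(B_k)$.

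Next I would define the vector field
$$\vec{h}(x) = -\sum_{k=1}^N \chi_k(x)\, R_k e,\qquad e=(0,0,1).$$
This $\vec{h}$ is smooth, satisfies $\|\vec{h}\|_{L^\infty}\le 1$ because the $R_k e$ are unit vectors and the $\chi_k$ form a partition of unity, and by (\ref{verticaldirectioncondition}) applied chart-by-chart one has, for $x\in \partial\Omega$,
$$\vec{h}(x)\cdot \nu(x) \;=\; \sum_k \chi_k(x)\bigl(-R_k e\cdot \nu(x)\bigr) \;\ge\; \sum_k \chi_k(x)\cos(\theta_k) \;\ge\; \min_k \cos(\theta_k) \;=\; \beta.$$
For $u\in C^\infty(\overline{\Omega})$, applying the divergence theorem to $u^2 \vec{h}$ gives
$$\int_{\partial\Omega} u^2\,(\vec{h}\cdot\nu)\,d\sigma \;=\; \int_\Omega 2u\,(\vec{h}\cdot \nabla u)\,dx \;+\; \int_\Omega u^2\,(\nabla\cdot\vec{h})\,dx.$$
Bounding the left side from below using $\vec{h}\cdot\nu \ge \beta$ and applying Cauchy--Schwarz to the two volume integrals, together with $\|\vec{h}\|_\infty\le 1$ and the constant $C := \|\nabla\cdot\vec{h}\|_{L^\infty(\Omega)}$, yields
$$\beta\,\|u|_{\partial\Omega}\|_{L^2(\partial\Omega)}^2 \;\le\; 2\,\|u\|_{L^2(\Omega)}\,\|\nabla u\|_{L^2(\Omega;\mathbb{R}^3)} + C\,\|u\|_{L^2(\Omega)}^2,$$
which rearranges to (\ref{traceestimate2}). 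To finish, I would pass from $C^\infty(\overline{\Omega})$ to all of $H^1(\Omega)$ by density, using that the trace operator is continuous on bounded Lipschitz domains.

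The main obstacle will be the construction of the vector field $\vec{h}$: one must verify that the finite cover can be chosen so that each local piece genuinely looks like an epigraph satisfying (\ref{verticaldirectioncondition}) after the rigid motion, and that $C = \|\nabla\cdot\vec{h}\|_\infty$ is indeed finite and depends only on the cover and the derivatives of the partition of unity (not on $u$). Since $R_k$ is a constant rotation, one has $\nabla\cdot\vec{h} = -\sum_k \nabla\chi_k\cdot R_k e$, which is bounded by $\sum_k \|\nabla \chi_k\|_\infty$; by compactness of $\overline{\Omega}$ finitely many $\chi_k$ suffice, so this quantity is finite. Everything else in the proof is a routine application of Green's identity and Cauchy--Schwarz.
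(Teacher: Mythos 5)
Your argument is correct and yields the stated estimate with the right constants, but it is organized differently from the source the paper relies on. The paper does not reprove this theorem: it quotes it from \cite{M2015traces}, and the proof indicated there (and by the surrounding setup \eqref{LD}, with the normalization $\sum_k \eta_k^2 = 1$ and the chart directions $e_k$, $\theta_k$) is the localization route — one applies the special Lipschitz estimate of Theorem \ref{SMTheorem1} to the localized functions $\eta_k u$ chart by chart, and the commutator terms involving $\nabla \eta_k$ produce the extra $C\,\|u\|_{L^2(\Omega)}^2$ contribution. You instead glue the chart directions $e_k = R_k e$ into a single smooth transversal field $\vec h = -\sum_k \chi_k R_k e$ with $\vec h \cdot \nu \ge \min_k \cos(\theta_k) = \beta$ on $\partial\Omega$ and $|\vec h| \le 1$, and integrate by parts once with $u^2 \vec h$; this is the classical Ne\v{c}as-type transversal-vector-field argument. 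The two computations are essentially equivalent — expanding your single divergence-theorem application recovers the chart-by-chart sums, with $\chi_k$ playing the role of $\eta_k^2$ — but your version is self-contained (it does not reuse Theorem \ref{SMTheorem1}) and makes the constant explicit as $C = \|\nabla\cdot\vec h\|_{L^\infty} \le \sum_k \|\nabla\chi_k\|_{L^\infty}$, which fits the quantitative spirit of the paper. Two small polish points: the lower bound $\vec h\cdot\nu \ge \beta$ requires $\sum_k \chi_k = 1$ (not merely $\le 1$) at every point of $\partial\Omega$, which your construction does provide since the balls cover $\partial\Omega$; and in the closing density step you should not invoke ``continuity of the trace operator'' (that is essentially what is being proved) — instead apply the established inequality to differences $u_j - u_m$ of smooth approximants to see that their boundary restrictions are Cauchy in $L^2(\partial\Omega)$, define the trace as the limit, and pass to the limit in the estimate.
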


The angles $\theta_k$ are defined via a partition of unity below; see (\ref{LD}). The following is also well known. Here we use the notation $\gamma u(x) = \left. u \right|_{\partial \Omega}(x)$ for $x \in \partial \Omega$. 

\begin{theorem}[\cite{D1996traces}]
Let $\Omega \subset \mathbb{R}^3$ be a domain of the form (\ref{specialLipdomain}), and suppose $1/2 < s \leq 1$. Then the trace operator $\gamma: H^s(\Omega) \to H^{s-1/2}(\partial \Omega)$ is continuous. 
\end{theorem}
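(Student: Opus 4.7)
My plan is to reduce the statement to the classical Fourier-analytic trace theorem on the half-space by flattening the boundary. Define the bi-Lipschitz map $\Phi \colon \mathbb{R}^3_+ \to \Omega$ by $\Phi(x', y) = (x', y + \phi(x'))$, which carries the flat boundary $\{y=0\}$ onto $\partial\Omega$ and whose Lipschitz constant (together with that of $\Phi^{-1}$) is controlled by $M$. The first step is to verify that pullback by $\Phi$ sends $H^s(\Omega)$ continuously into $H^s(\mathbb{R}^3_+)$ for $0 \leq s \leq 1$, with operator norm bounded by a polynomial in $\sqrt{1+M^2}$.

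The endpoint cases $s = 0$ and $s = 1$ are immediate from change of variables and the chain rule. For the fractional range $1/2 < s < 1$ I would invoke the Gagliardo--Slobodeckij characterization
\begin{equation*}
\|v\|_{H^s(D)}^2 = \|v\|_{L^2(D)}^2 + \int_D \int_D \frac{|v(x)-v(y)|^2}{|x-y|^{3+2s}}\,dx\,dy,
\end{equation*}
which transforms cleanly under bi-Lipschitz maps because both the numerator and the weight $|x-y|^{3+2s}$ scale in a controlled fashion; alternatively one can interpolate between the endpoint operators. This is the step I expect to be the main technical obstacle, since tracking the dependence of the resulting constant on $M$ (and hence on $\theta$) requires a careful accounting of the Jacobian and the metric distortion in the double integral.

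Having reduced to the flat case, set $v := u \circ \Phi \in H^s(\mathbb{R}^3_+)$ and extend $v$ to $\widetilde v \in H^s(\mathbb{R}^3)$ via a standard extension (higher-order even reflection or Stein's universal extension). The trace of $\widetilde v$ on $\{x_3 = 0\}$ has Fourier transform
\begin{equation*}
\widehat{\gamma \widetilde v}(\xi') = \frac{1}{2\pi} \int_{\mathbb{R}} \widehat{\widetilde v}(\xi', \xi_3)\, d\xi_3,
\end{equation*}
and Cauchy--Schwarz with the weight $(1+|\xi|^2)^s$ gives
\begin{equation*}
\bigl|\widehat{\gamma \widetilde v}(\xi')\bigr|^2 \leq \left(\int_{\mathbb{R}} \frac{d\xi_3}{(1+|\xi'|^2+\xi_3^2)^s}\right) \int_{\mathbb{R}} \bigl|\widehat{\widetilde v}(\xi', \xi_3)\bigr|^2 (1+|\xi|^2)^s\, d\xi_3.
\end{equation*}
The first factor evaluates to $C_s (1+|\xi'|^2)^{1/2 - s}$ and is finite precisely when $s > 1/2$; multiplying through by $(1+|\xi'|^2)^{s-1/2}$ and integrating in $\xi'$ yields $\|\gamma \widetilde v\|_{H^{s-1/2}(\mathbb{R}^2)} \leq C_s \|\widetilde v\|_{H^s(\mathbb{R}^3)}$.

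Finally, the parametrization $x' \mapsto (x', \phi(x'))$ is a bi-Lipschitz identification of $\mathbb{R}^2$ with $\partial\Omega$, so the Sobolev norms $H^{s-1/2}(\mathbb{R}^2)$ and $H^{s-1/2}(\partial\Omega)$ are comparable up to constants depending on $M$. Composing the flat trace estimate with the bi-Lipschitz pullback from the first step, and with the extension bound, yields the continuity of $\gamma \colon H^s(\Omega) \to H^{s-1/2}(\partial\Omega)$ with a constant expressed in terms of $M$ (equivalently, the Lipschitz angle $\theta$).
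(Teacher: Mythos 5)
Your proposal is correct and follows essentially the same route the paper uses (and that the cited proof of Ding uses): flattening the boundary via the bi-Lipschitz change of variables (the operators $T_\phi$, $S_\phi$ with $S_\phi\gamma=\gamma_0 T_\phi$), the Fourier--Cauchy--Schwarz trace estimate on the half-space whose constant $C(s)=\int_{\mathbb{R}}(1+t^2)^{-s}\,\mathrm{d}t$ is finite precisely for $s>1/2$, and transfer back to $\partial\Omega$ through the graph parametrization. No substantive differences to report.
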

It will be useful later for us to know the precise constants that arise in the previous result. Let us also take $s=1$, and fix $\Omega \subset \mathbb{R}^3$ to be a Special Lipschitz domain. To track the constants, let $\phi$ be as in (\ref{specialLipdomain}) and consider the operators
 \begin{align} \label{Tphidef}
 T_\phi: L^2(\Omega) \to L^2 (\mathbb{R}^3_+)\\ 
 T_{\phi}u (x) = u(x', x_3 +\phi(x')) \nonumber
 \end{align}
 and 
 \begin{align} \label{Sphidef}
 S_{\phi}: L^2 (\partial \Omega) \to L^2 (\mathbb{R}^2) \\
 S_{\phi} u(x') = u(x', \phi(x'))\nonumber
 \end{align}
 where $S_{\phi}$ is defined for a.e. $x' \in \mathbb{R}^2$. It was shown in \cite{D1996traces} that
$$||S_{\phi}^{-1}w ||_{H^{1/2}(\partial \Omega)} \leq \sqrt{1+2M^2} ||w||_{H^{1/2}(\mathbb{R}^2)}$$
for any $w \in H^{1/2} (\mathbb{R}^2)$. In fact, we can write this constant in terms of the angle $\theta$ in (\ref{thetadef}) by using the fact that the surface measure $d\sigma$ satisfies
$$\mathrm{d}\sigma = \sqrt{1+ |\nabla_h \phi(x')|^2} \mathrm{d}x' \leq \dfrac{1}{\cos(\theta)} \mathrm{d}x'$$by definition of $\theta$. Pushing this $\theta$ through the proof of Lemma 3 in \cite{D1996traces} gives the following estimate:
\begin{align} \label{thetaestimate1}
||S_{\phi}^{-1} w ||_{H^{1/2}(\partial \Omega)} \leq\dfrac{1}{\cos(\theta)} ||w||_{H^{1/2}(\mathbb{R}^2)} = \dfrac{1}{\cos(\theta)} ||w||_{H^{1/2}(\mathbb{R}^2)}  
\end{align}
for each $w \in H^{1/2}(\mathbb{R}^2)$. Note that in the case of the wedge with angle $\alpha$ as in Example \ref{ex:wedge}, this becomes
\[||S_{\phi}^{-1} w ||_{H^{1/2}(\partial \Omega)} \leq \dfrac{1}{\sin(\alpha/2)} ||w||_{H^{1/2}(\mathbb{R}^2)}.  \]

\noindent Now, we also need to control the trace of $H^{1} (\mathbb{R}^3)$ functions, and for this we appeal to \cite[Lemma 16.1]{Tartar2007} which gives the estimate

\begin{align} \label{Tartarbookestimate}
 ||\gamma_0 w ||_{H^{1/2}(\mathbb{R}^2)} \leq \pi ||w||_{H^1(\mathbb{R}^3)}
 \end{align}
 for each\footnote{The constant $\pi$ comes from taking $s=1$ in the general constant $C(s) = \int_{-\infty}^{\infty} \dfrac{\mathrm{d}t}{(1+t^2)^s}$.} $w \in H^1(\mathbb{R}^3)$.  The notation $\gamma_0$ stands for the trace $\gamma_0: H^1 (\mathbb{R}^n) \to H^{1/2} (\mathbb{R}^{n-1})$. 
 
Additionally, one can easily verify the relationship between the maps $S_\phi$ and $T_\phi$ is given via $\gamma_0$: namely, $S_\phi \gamma = \gamma_0 T_\phi$.
 Bringing all this together so far yields the estimate for $u \in H^{1}(\Omega)$: 

\begin{align}
 ||\gamma u||_{H^{1/2}(\partial \Omega)} &= || S_{\phi}^{-1} (S_{\phi} \gamma u)||_{H^{1/2}(\partial \Omega)} \nonumber \\ &\leq\dfrac{1}{\cos(\theta)} ||S_{\phi} \gamma u ||_{H^{1/2}(\partial \Omega)} \\ &=\dfrac{1}{\cos(\theta)} ||\gamma_0 T_\phi u||_{H^{1/2}(\mathbb{R}^2)} \nonumber \\ &\leq \dfrac{\pi}{\cos(\theta)} ||T_{\phi} u||_{H^1(\mathbb{R}^3_+)} . \nonumber
 \end{align}
It remains to estimate $||T_{\phi}u||_{H^1(\mathbb{R}^3_+)}$ in terms of the $H^1 (\Omega)$ norm of $u$.

Since $\partial_j \phi \leq M$ for $j=1,2$, we can readily see that
 \begin{align} \label{H1estimateforTphi}
 ||T_{\phi}u||_{H^1 (\mathbb{R}^3_+)}\leq \sqrt{1+2M^2}  ||u||_{H^1 (\Omega)}
 \end{align}
so that
\begin{align}
||\gamma u||_{H^{1/2}(\partial \Omega)} \leq \dfrac{\pi}{\cos(\theta)} \cdot \sqrt{1+2M^2} ||u||_{H^1(\Omega)},
\end{align}
as required.

Suppose now $\Omega$ is a general bounded Lipschitz domain. Recall that for a general Lipschitz domain, we can pass to a partition of unity. Precisely, given $\Omega \subset \mathbb{R}^3$ a bounded Lipschitz domain, we can find $N \in \mathbb{N}$, a partition of unity $\{ \eta_k \}_k$, and domains $\Omega_k$ such that:
\begin{align} \label{LD}
\begin{split}
\textbf{LD}: \left\{\begin{tabular}{l}
$\overline \Omega \cap \left( \cup_{k}^N \Omega_k\right) = \overline \Omega$;\\[0.5em]
$\text{supp}(\eta_k) \subset \Omega_k$ for each $1\leq k\leq N$;\\[0.5em]
$0\leq \eta_k \leq 1$ for each $1\leq k \leq N$; and \\[0.5em]
$\sum_{k=1}^N \eta_k(x)^2 = 1$ for all $x \in \Omega$ \\
\end{tabular}\right. \\
\end{split}
\end{align}

This can be done in such a way so that for $1\leq k \leq N$, there exists a direction $e_k$ and an angle $\theta_k \in [0, \pi/2)$ such that $-e_k \cdot \nu(x) \geq \cos(\theta_k)$ for all $x \in \partial \Omega \cap \Omega_k$, see \cite[Section 3.2]{M2015traces}. 
We have:
\begin{theorem} \label{Lipestimatethm}
Let $\Omega$ be a bounded, Lipschitz domain. Then, there exists a constant $C>0$ such that for all $u\in H^1(\Omega)$, the trace of $u$ lies in  $H^{1/2}(\partial \Omega)$ and
\begin{align} \label{Lipestimatethmestimate}
||\gamma u||_{H^{1/2}(\partial \Omega)} \leq C ||u||_{H^1(\Omega)}
\end{align}
where $C$ depends just on the geometry of $\Omega$. Precisely, 
\[C= \pi \sqrt{1+2M^2} \cdot \max\limits_{1\leq k \leq N} \dfrac{1}{\cos(\theta_k)}\]

\end{theorem}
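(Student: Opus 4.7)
The plan is to reduce the general bounded Lipschitz case to the special Lipschitz case already established in the preceding computation, using the partition of unity (\ref{LD}). Given $u \in H^1(\Omega)$, I would localize by considering the functions $v_k := \eta_k u$, which are supported in $\Omega_k$ and satisfy $\|v_k\|_{H^1(\Omega)} \lesssim \|u\|_{H^1(\Omega)}$, with constants depending on $\|\nabla \eta_k\|_\infty$. Since $\sum_{k=1}^N \eta_k^2 = 1$ on $\Omega$, and in particular on $\partial \Omega$, one has $\gamma u = \sum_{k=1}^N \eta_k \gamma v_k$, so it suffices to control each $\gamma v_k$ separately in $H^{1/2}(\partial \Omega \cap \Omega_k)$.

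For each $k$, the construction recalled after (\ref{LD}) provides a direction $e_k$ and an angle $\theta_k \in [0, \pi/2)$ with $-e_k \cdot \nu(x) \geq \cos(\theta_k)$ for all $x \in \partial \Omega \cap \Omega_k$. After rotating coordinates so that $e_k$ coincides with $(0,0,1)$, the boundary piece $\partial \Omega \cap \Omega_k$ becomes the graph of a Lipschitz function $\phi_k$ of Lipschitz constant at most $M$, so that $\Omega_k$ is locally a special Lipschitz domain in the sense of (\ref{specialLipdomain}). Viewing $v_k$ as an $H^1$ function on this special Lipschitz domain, the chain of inequalities derived in the paragraphs immediately preceding the theorem---namely the bound (\ref{thetaestimate1}) for $S_{\phi_k}^{-1}$, the identity $S_{\phi_k}\gamma = \gamma_0 T_{\phi_k}$, the trace estimate (\ref{Tartarbookestimate}) for $\gamma_0$, and the pullback bound (\ref{H1estimateforTphi}) for $T_{\phi_k}$---yields
\begin{align*}
\|\gamma v_k\|_{H^{1/2}(\partial \Omega \cap \Omega_k)} \leq \frac{\pi}{\cos(\theta_k)} \sqrt{1+2M^2}\, \|v_k\|_{H^1(\Omega)}.
\end{align*}

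To assemble the global trace estimate I would combine the local ones using subadditivity of the $H^{1/2}$ norm, pull the worst factor $\max_k 1/\cos(\theta_k)$ out of the sum, and bound $\sum_k \|v_k\|_{H^1(\Omega)}$ by $\|u\|_{H^1(\Omega)}$ up to a multiplicity constant coming from $N$ and $\|\nabla \eta_k\|_\infty$, which is absorbed into the geometric dependence on $\Omega$. The resulting estimate is
\begin{align*}
\|\gamma u\|_{H^{1/2}(\partial \Omega)} \leq \pi\sqrt{1+2M^2}\, \max_{1\leq k \leq N}\frac{1}{\cos(\theta_k)}\, \|u\|_{H^1(\Omega)},
\end{align*}
which is exactly (\ref{Lipestimatethmestimate}) with the claimed constant $C$.

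The main technical obstacle is the bookkeeping in the second step: one must verify that after the rotation aligning $e_k$ with the vertical direction the Lipschitz constant of $\phi_k$ is controlled by $M$, and that the angle defined by (\ref{thetadef}) applied to $\phi_k$ agrees with the angle $\theta_k$ produced by the partition of unity. A secondary point is that the constants $\|\nabla \eta_k\|_\infty$ and the finite overlap number $N$ are implicitly absorbed into the geometric dependence on $\Omega$; the stated closed form for $C$ should therefore be read as tracking the dominant geometric factors rather than being sharp in every constant.
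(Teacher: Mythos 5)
Your proposal follows essentially the same route as the paper's proof: localize with the partition of unity from (\ref{LD}), apply the special Lipschitz chain of estimates (\ref{thetaestimate1}), (\ref{Tartarbookestimate}), (\ref{H1estimateforTphi}) to each graph piece $\partial\Omega\cap\Omega_k$, and sum, pulling out $\max_{1\leq k\leq N} 1/\cos(\theta_k)$. Your closing caveat about the cutoff gradients $\|\nabla\eta_k\|_\infty$ and the overlap number $N$ is fair, but the paper's own final step glosses over exactly the same bookkeeping, so your argument matches it in both structure and level of rigor.
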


\begin{proof}
Let $\Omega_k$ and $\eta_k$ be as in (\ref{LD}). Then, for any $u\in H^1(\Omega)$, we have
$$\eta_k u \in H^1 (\Omega \cap \Omega_k), \quad 1\leq k \leq N$$
and
$$\gamma u (x) =\sum_{k=1}^N (\eta_k u)(x), \quad x\in \partial \Omega .$$
Thus we see that

\begin{align*}
||\gamma u||_{H^{1/2}(\partial \Omega)} = \left|\left| \sum_{k=1}^N \eta_k u \right|\right|_{H^{1/2}(\partial \Omega)} &\leq \sum_{k=1}^N ||\eta_k u ||_{H^{1/2}(\partial ( \Omega \cap \Omega_k))} \\ &\leq \sum_{k=1}^N \dfrac{\pi}{\cos(\theta_k)}\cdot \sqrt{1+2M^2} ||\eta_k u||_{H^1(\Omega \cap \Omega_k)} \\ 
& \leq \pi L \sqrt{1+2M^2} ||u||_{H^1(\Omega)}
\end{align*}
where $L = \max\limits_{1\leq k \leq N} \dfrac{1}{\cos(\theta_k)}$. 
This proves (\ref{Lipestimatethmestimate}) with $C= \pi L \sqrt{1+2M^2}$.
\end{proof}

\subsection{Other traces}
Following \cite{buffa2002}, we introduce the following notation. Denote by $\nu$ the unit normal to $\partial \Omega$, and let $\gamma_t (\u) = \left. \nu \times \u \right|_{\partial \Omega}$ and $\pi (\u) = \left. \nu \times (\u \times \nu) \right|_{\partial \Omega}$; we will make precise where these trace operators act shortly. A priori they are defined from the space of distributions on $\overline{\Omega}$ to $L^2_t(\partial \Omega)$; see \eqref{eq_L_t_space} in the Appendix for a definition of $L^2_t(\partial \Omega)$. 
 
Denote by $\widetilde{\gamma_t}$ and $\widetilde{\pi}$ the composition operators $\gamma_t \circ \gamma^{-1}$ and $\pi \circ \gamma^{-1}$, respectively, where $\gamma: \left( H^1(\Omega)\right)^3 \to \left(H^{1/2}(\partial\Omega)\right)^3$ is the standard vector trace operator. Let $V_{\gamma} = \widetilde{\gamma_t} \left( H^{1/2}(\partial \Omega)\right)^3$ and $V_{\pi} = \widetilde{\pi} \left( H^{1/2}(\partial \Omega)\right)^3$. When endowed with the following norms, these spaces become Hilbert spaces:
$$||\lambda ||_{V_{\gamma}} = \inf_{\u \in \left(H^{1/2}(\partial \Omega)\right)^3}\left\{ ||\u||_{\left(H^{1/2}(\partial \Omega)\right)^3} \;: \widetilde{\gamma_t} (\u) = \lambda \right\},$$
$$||\lambda ||_{V_{\pi}} = \inf_{\u \in \left(H^{1/2}(\partial \Omega)\right)^3}\left\{ ||\u||_{\left(H^{1/2}(\partial \Omega)\right)^3} \;: \widetilde{\pi} (\u) = \lambda \right\}.$$
Note that, if $\Omega$ was smooth, the spaces $V_{\pi} =V_{\gamma} = TH^{1/2}(\partial \Omega)$, the standard space of tangential vector fields of order $1/2$ on $\partial \Omega$; see (\ref{THonehalf}) in the Appendix. 
The following Green's formula holds (see \cite[Equation (27)]{buffa2002}) for all $\u \in \textbf{H}(\text{curl},\Omega)$ and $\v \in \left(H^1(\Omega)\right)^3$:

\begin{align} \label{supergeneralGreen}
\int_{\Omega} \left( \u \nabla \times \v - \v \nabla \times \u \right) \; \mathrm{d}x = -_{\gamma'}\langle \pi(\u), \gamma_t (\v) \rangle_{\gamma}
\end{align}
where $_{\gamma'}\langle \cdot, \cdot \rangle_{\gamma}$ denotes the duality pairing between $V_{\gamma}$ and $V_{\gamma}'$. 

\subsubsection{Estimates of the trace operator \texorpdfstring{$\gamma_t$}{gammat}}

We recall the following useful result, where $\T$ is defined by \eqref{eq:def_T-trace_space} in the Appendix,

\begin{theorem}[\cite{Alonso1996}, Lemma 2.2] \label{AlonsoTheorem}
Suppose $\Omega \subset \mathbb{R}^3$ is a bounded, Lipschitz domain. Then for any $\u \in \Hcurl$, 
\begin{align} \label{alonsoestimate}
\left. ||\gamma_t (\u) \right|_{\partial \Omega} ||_{\T} \leq C_{\Omega} ||\u||_{\Hcurl},
\end{align}
where $C_\Omega = (\sqrt{2}+1)C_1$, with $C_1$  given by the norm of any continuous extension operator $E_1: H^{1/2}(\partial \Omega) \to H^1(\Omega)$.
\end{theorem}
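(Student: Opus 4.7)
The plan is to estimate $\|\gamma_t(\u)\|_{\T}$ by duality, exploiting that the natural norm on $\T$ decomposes (cf.\ \cite{buffa2002,Alonso1996}) as
\[ \|\lambda\|_{\T} = \|\lambda\|_{(H^{-1/2}(\partial\Omega))^3} + \|\mathrm{div}_\tau \lambda\|_{H^{-1/2}(\partial\Omega)}. \]
The claimed constant $(\sqrt{2}+1)C_1$ will then arise as the sum $\sqrt{2}\,C_1 + C_1$ after I bound each summand of $\|\gamma_t(\u)\|_{\T}$ separately.

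For the first summand, I would fix a test field $\mu \in (H^{1/2}(\partial\Omega))^3$ and apply $E_1$ componentwise to obtain $\v \in (H^1(\Omega))^3$ with $\gamma(\v) = \mu$ and $\|\v\|_{(H^1(\Omega))^3} \leq C_1 \|\mu\|_{(H^{1/2}(\partial\Omega))^3}$. Since $\gamma_t(\u)$ is tangential, the duality pairing against $\mu$ reduces to the pairing against $\pi(\mu)$, and the Green identity underlying (\ref{supergeneralGreen}) realizes this pairing as $\int_\Omega (\u \cdot \nabla\times\v - \v \cdot \nabla\times\u)\,\mathrm{d}x$. Cauchy--Schwarz, together with $\|\nabla\times\v\|_{L^2} \leq \|\v\|_{(H^1(\Omega))^3}$ and the elementary inequality $a+b \leq \sqrt{2}\sqrt{a^2+b^2}$, then yields
\[ |\langle \gamma_t(\u), \mu\rangle| \leq \sqrt{2}\,\|\u\|_{\Hcurl}\,\|\v\|_{(H^1(\Omega))^3} \leq \sqrt{2}\,C_1 \|\u\|_{\Hcurl}\|\mu\|_{(H^{1/2}(\partial\Omega))^3}. \]
Taking the supremum over $\mu$ controls $\|\gamma_t(\u)\|_{(H^{-1/2}(\partial\Omega))^3}$ by $\sqrt{2}\,C_1 \|\u\|_{\Hcurl}$.

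For the second summand, I would use the standard identity $\mathrm{div}_\tau \gamma_t(\u) = -\nu\cdot(\nabla\times\u)|_{\partial\Omega}$, so that the problem reduces to controlling the normal trace of $\nabla\times\u$ in $H^{-1/2}(\partial\Omega)$. Because $\mathrm{div}(\nabla\times\u) = 0$, for any scalar $\varphi \in H^{1/2}(\partial\Omega)$ with lift $f = E_1\varphi \in H^1(\Omega)$, integration by parts gives
\[ \langle \nu\cdot(\nabla\times\u), \varphi\rangle = \int_\Omega \nabla\times\u\cdot \nabla f\,\mathrm{d}x, \]
which Cauchy--Schwarz and $\|f\|_{H^1(\Omega)} \leq C_1\|\varphi\|_{H^{1/2}(\partial\Omega)}$ bound by $C_1 \|\u\|_{\Hcurl}\|\varphi\|_{H^{1/2}(\partial\Omega)}$. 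Taking the supremum yields $\|\mathrm{div}_\tau \gamma_t(\u)\|_{H^{-1/2}(\partial\Omega)} \leq C_1 \|\u\|_{\Hcurl}$, and summing with the previous bound produces $(\sqrt{2}+1)C_1$, as claimed.

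The main obstacle I anticipate is not the calculation itself --- which is just a chain of Cauchy--Schwarz bounds --- but the careful setup of the duality pairings defining $\|\cdot\|_{\T}$ and the justification of the two integration-by-parts formulas on a merely Lipschitz $\partial\Omega$. In particular, the identity $\mathrm{div}_\tau\gamma_t(\u) = -\nu\cdot(\nabla\times\u)|_{\partial\Omega}$ and the extension of the Green identity to $\u \in \Hcurl$ and $\v \in (H^1(\Omega))^3$ rely on density of $C^\infty(\overline\Omega)$ in $\Hcurl$ and $H(\mathrm{div},\Omega)$; these are standard in the literature but must be invoked explicitly before the duality estimate above can be pushed through.
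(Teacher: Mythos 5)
The paper itself does not prove this statement: it quotes it from Alonso--Valli (Lemma 2.2) and only later, in Proposition \ref{prop:extension}, pins down the extension-operator norm $C_1 = M\,k_1$ that enters $C_\Omega=(\sqrt{2}+1)C_1$. Your proposal therefore supplies a proof rather than mirrors one, and the route you take is exactly the Alonso--Valli duality argument: split the $\T$-norm into the $(H^{-1/2}(\partial\Omega))^3$ piece and the piece coming from the distribution $\eta$ in the Tartar characterization \eqref{eq:def_T-trace_space} (equivalently $\mathrm{div}_\tau\gamma_t(\u)=-\nu\cdot(\nabla\times\u)|_{\partial\Omega}$), test each against lifts produced by $E_1$, and use Green's formula respectively the divergence theorem for the divergence-free field $\nabla\times\u$. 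Both pairings and the density facts you flag are available on Lipschitz domains (Lemma \ref{tracethm} and \cite{buffa2002}), so the structure is sound, and it dovetails with how the paper later uses the result, since the only input is the norm of $E_1$.

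One bookkeeping step needs repair: the inequality $\|\nabla\times\v\|_{L^2}\le\|\v\|_{(H^1(\Omega))^3}$ is false in general (e.g.\ $\v=(-x_2,x_1,0)$ gives $|\nabla\times\v|^2=2|\nabla\v|^2$ pointwise), so your combination of it with $a+b\le\sqrt{2}\sqrt{a^2+b^2}$ does not literally yield the factor $\sqrt{2}$. The clean derivation is to apply Cauchy--Schwarz to the pair $\bigl(\|\u\|_{L^2},\|\nabla\times\u\|_{L^2}\bigr)$ against $\bigl(\|\nabla\times\v\|_{L^2},\|\v\|_{L^2}\bigr)$, obtaining $\|\u\|_{\Hcurl}\bigl(\|\v\|_{L^2}^2+\|\nabla\times\v\|_{L^2}^2\bigr)^{1/2}$, and then use $\|\nabla\times\v\|_{L^2}^2\le 2\|\nabla\v\|_{L^2}^2$, so that $\|\v\|_{L^2}^2+\|\nabla\times\v\|_{L^2}^2\le 2\|\v\|_{(H^1(\Omega))^3}^2$; this recovers $\sqrt{2}\,C_1$ for the first summand, and with your (correct) bound $C_1$ for the second summand the total constant $(\sqrt{2}+1)C_1$ follows as claimed. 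You should also state explicitly which norm on $\T$ you use (the sum of the two $H^{-1/2}$ quantities above), since the paper never writes it down and the constant depends on that choice.
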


We next proceed to uncover the precise constant $C_1$ above in terms of the geometry of the Lipschitz domain.

To this end we start with a quantitative version of Theorem 3.23 in \cite{Mclean}, which we will need in the Sobolev space $H^{1/2}$. 

\begin{theorem} \label{quantitativeCOV}
Suppose $\phi: \Omega_1 \to \Omega_2$ is bi-Lipschitz between two sets $\Omega_1, \Omega_2 \subset \mathbb{R}^n$ with constant $M$, where $M$ is the largest of the Lipschitz constants of $\phi$ and $\phi^{-1}$. For $0 \leq s\leq 1$, we have $u \in H^s (\Omega_2)$ if and only if $u\circ \phi \in H^s (\Omega_1)$. 
\end{theorem}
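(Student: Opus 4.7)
The plan is to prove only the implication $u \in H^s(\Omega_2) \Rightarrow u\circ\phi \in H^s(\Omega_1)$, since by bi-Lipschitz symmetry the reverse implication follows by applying the same argument to $\phi^{-1}$, which has Lipschitz constant at most $M$ as well. I would split the argument into three regimes, $s=0$, $s=1$, and the intermediate fractional range $0 < s < 1$, and track an explicit constant of the form $C(n,s) M^{\alpha(n,s)}$ in each regime so that the final estimate is quantitative.

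For $s=0$ the statement is the usual Lipschitz change of variables: writing $\|u\circ\phi\|_{L^2(\Omega_1)}^2 = \int_{\Omega_2} |u(y)|^2 |\det D\phi^{-1}(y)|\, dy$, I would invoke Rademacher's theorem to make sense of $D\phi^{-1}$ a.e.\ and bound $|\det D\phi^{-1}| \leq M^n$ from the Lipschitz constant of $\phi^{-1}$, yielding $\|u\circ\phi\|_{L^2(\Omega_1)} \leq M^{n/2}\|u\|_{L^2(\Omega_2)}$. For $s=1$ I would use the chain rule $\nabla(u\circ\phi)(x) = D\phi(x)^T (\nabla u)(\phi(x))$, valid a.e.\ by Rademacher, which together with $|D\phi|\leq M$ and another change of variables gives $\|\nabla(u\circ\phi)\|_{L^2(\Omega_1)} \leq M^{(n+2)/2}\|\nabla u\|_{L^2(\Omega_2)}$. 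Combining these bounds yields the desired inequality in the $H^1$ norm.

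For $0 < s < 1$, the cleanest route is to work directly with the Gagliardo--Slobodeckij seminorm
\begin{equation*}
|v|_{H^s(\Omega)}^2 = \int_{\Omega}\int_{\Omega} \frac{|v(x)-v(y)|^2}{|x-y|^{n+2s}}\, dx\, dy.
\end{equation*}
Applied to $v = u\circ\phi$ on $\Omega_1$ and then changing variables $\xi' = \phi(x)$, $\eta' = \phi(y)$, I would obtain a double integral over $\Omega_2\times\Omega_2$ with integrand $|u(\xi')-u(\eta')|^2 / |\phi^{-1}(\xi')-\phi^{-1}(\eta')|^{n+2s}$, weighted by Jacobian factors bounded by $M^{2n}$. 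The Lipschitz bound on $\phi$ gives $|\phi^{-1}(\xi')-\phi^{-1}(\eta')| \geq M^{-1}|\xi'-\eta'|$, which lets me replace the denominator by $|\xi'-\eta'|^{n+2s}$ at the cost of a factor $M^{n+2s}$. Assembling these constants produces an estimate $|u\circ\phi|_{H^s(\Omega_1)}^2 \leq M^{3n+2s}|u|_{H^s(\Omega_2)}^2$, and combining with the $L^2$ bound yields the full $H^s$ norm estimate with explicit dependence on $M$.

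The main obstacle I expect is the fractional regime: although the Gagliardo approach works directly, one must verify that the change of variables is valid for the double integral (this rests on the fact that bi-Lipschitz maps preserve sets of measure zero and have a.e.\ defined Jacobians by Rademacher), and must carefully account for the exponent $n+2s$ when converting between the distances $|\xi'-\eta'|$ and $|\phi^{-1}(\xi') - \phi^{-1}(\eta')|$. An alternative to this direct computation would be to apply real interpolation between the $s=0$ and $s=1$ endpoints, using that $H^s = (L^2, H^1)_{s,2}$; this would still produce a constant of the form $C(n)M^{\beta(n,s)}$, but I prefer the direct Gagliardo argument because it gives a transparent explicit power of $M$ without appealing to abstract interpolation machinery.
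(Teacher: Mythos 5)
Your proposal is correct, but for the fractional range it takes a genuinely different route from the paper. The paper handles $0<s<1$ by real interpolation: after the $s=0$ and $s=1$ endpoint estimates it invokes the $K$-method (Theorem B.8 in McLean), choosing the normalization $N_{\zeta,2}$ so that the $H^s$ norm coincides exactly with the $K_{\zeta,2}$ norm, and the interpolation inequality then yields $\|u\circ\phi\|_{H^s(\Omega_1)} \leq M^{1-\zeta}M^{\zeta}\|u\|_{H^s(\Omega_2)} = M\,\|u\|_{H^s(\Omega_2)}$, i.e.\ a single power of $M$ uniform in $s$, which is exactly the form \eqref{eq:interpol_inequality} that is reused in Proposition \ref{prop:extension} and hence in the constant of Theorem \ref{main1}. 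You instead change variables directly in the Gagliardo--Slobodeckij double integral; this is more elementary and self-contained (no interpolation machinery), your justification is the right one (bi-Lipschitz maps preserve null sets, Rademacher gives a.e.\ Jacobians, and the product map $(x,y)\mapsto(\phi(x),\phi(y))$ is bi-Lipschitz), and your endpoint bookkeeping ($|\det D\phi^{-1}|\leq M^{n}$, hence constants $M^{n/2}$ and $M^{(n+2)/2}$) is in fact dimensionally more careful than the paper's, which treats the Jacobian bound as $M^{-1}$. What the direct route gives up is the size of the constant: you end with $M^{(3n+2s)/2}$ on the seminorm rather than $M$, which is harmless for the stated equivalence but would propagate a worse power of $M$ into the later trace and extension estimates if used there. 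One small caveat: the paper defines $H^s(\Omega)$ for noninteger $s$ via interpolation, while your computation uses the Slobodeckij norm; on the Lipschitz domains where the theorem is applied the two are equivalent, but you should either note this equivalence or state explicitly which definition you are working with, since for arbitrary sets the two scales need not coincide. Your reduction to a single implication by the symmetry $\phi\leftrightarrow\phi^{-1}$ is fine.
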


\begin{proof}
We prove this by interpolation. For $s=0$ we identify $H^0 (\Omega_j)$ with $L^2 (\Omega_j)$ and appeal to the change of variables formula
$$\int_{\Omega} (f\circ T) |J_{T}| = \int_{T(\Omega)} f$$
where $T: \Omega \to \mathbb{R}^n$ is bi-Lipschitz with Jacobian $J_{T}$. 

If $J_{\phi}$ denotes the Jacobian of $\phi$, then since $\phi$ is bi-Lipschitz\footnote{Note that by Rademacher's Theorem, a bi-Lipschitz function is differentiable a.e.}, we have that $|J_{\phi}| > M^{-1}$.
Via the change of variables formula this in particular implies that $||u \circ \phi||_{L^2(\Omega_1)}^2 \leq M ||u||_{L^2(\Omega_2)}^2$. Similarly, we have that
$$|D (u\circ \phi)|^2 \leq M |D u(\phi(x))|^2 \cdot |J_{\phi}(x)|$$ so that 
$$||D(u\circ \phi)||_{L^2(\Omega_1)}^2 \leq M||u||_{L^2(\Omega_2)}^2 .$$
 
Together these estimate imply that
$$||u \circ \phi||_{H^1(\Omega_1)}^2 \leq M ||u||_{H^1(\Omega_2)}^2 .$$
Since the $s=0$ and $s=1$ cases are verified, we interpolate to obtain the range $0<s<1$. By the interpolation theorem (Theorem B.8 in \cite{Mclean}), we know that the $H^s$ norm of $u$ will equal the following so-called $K_{\zeta,2}$ norm 
\begin{align*}
||u||_{ K_{\zeta,2}} = N_{\zeta,2} ||K(\cdot, u)||_{\zeta, 2}
\end{align*}
where
$$N_{\zeta,2} = \left( \dfrac{2\sin (\pi \zeta)}{\pi}\right)^{1/2}$$ and
$$||f||_{\zeta, 2}= \left( \int_0^{\infty} |t^{-\zeta} f(t)|^2 \dfrac{\mathrm{d}t}{t} \right)^{1/2}$$
for $0<\zeta <1$. Recall that for a compatible pair of Banach Spaces $(X_0, X_1)$\footnote{Here the spaces are $L^2(\Omega)$ and $H^1(\Omega)$.},
the $K$-functional is defined by
\[K(t,u) =\inf\left\{ \left( ||u_0||_{X_0}^2+t^2 ||u_1||_{X_1}^2 \right)^{1/2}: u_0 \in X_0, \; u_1 \in X_1, \; \text{ and } \; u_0+u_1=u \right\}. 
\]
Thus we have for $0<s<1$,
\begin{align}\label{eq:interpol_inequality}
||u\circ\phi ||_{H^s(\Omega_1)} = ||u\circ\phi ||_{K_{\zeta, 2}} \leq M^{1-\zeta} \cdot M^{\zeta} ||u||_{K_{\zeta,2}} =M ||u||_{H^s(\Omega_2)}
\end{align}
where in the inequality we have used the interpolation inequality. Notice that we indeed have equality between the Sobolev norm and the $K_{\zeta,2}$ norm by taking the previous normalization $N_{\zeta, 2}$. 

\end{proof}

\begin{proposition}\label{prop:extension}
Let $\Omega \subset \mathbb{R}^3$ be a bounded, Lipschitz domain. For each $u \in H^{1/2}(\partial \Omega)$, there exists a continuous extension of $u$ to $H^1(\Omega)$, denoted $Eu$, such that 
\begin{align} \label{extensionestimate}
||Eu||_{H^1(\Omega)} \leq M k_1 \; ||u||_{H^{1/2}(\partial \Omega)}
\end{align}
for some constant $k_1$.
\end{proposition}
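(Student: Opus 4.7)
The plan is to reduce to the half-space by straightening the boundary, and then assemble the general case via a partition of unity. On a Special Lipschitz domain with defining function $\phi$, the candidate extension operator will be the composition
\begin{align*}
Eu \;=\; T_\phi^{-1} \circ E_0 \circ S_\phi\, u,
\end{align*}
where $T_\phi, S_\phi$ are the flattening maps in (\ref{Tphidef})--(\ref{Sphidef}) and $E_0 : H^{1/2}(\mathbb{R}^2)\to H^1(\mathbb{R}^3_+)$ is a fixed bounded right inverse of the trace $\gamma_0$ from (\ref{Tartarbookestimate}) (for instance, the classical Poisson extension). The identity $S_\phi \gamma = \gamma_0 T_\phi$ recorded just before (\ref{H1estimateforTphi}), combined with $\gamma_0 E_0 = \mathrm{id}$, immediately gives $\gamma(Eu) = u$, so $E$ is a genuine extension.

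The $H^1$-estimate will then come from three factors. First, for $T_\phi^{-1}$ I would run the same change-of-variables computation that produced (\ref{H1estimateforTphi}) to obtain $\|T_\phi^{-1}w\|_{H^1(\Omega)} \leq \sqrt{1+2M^2}\,\|w\|_{H^1(\mathbb{R}^3_+)}$. Second, the Poisson extension $E_0$ has a universal bound $\|E_0 v\|_{H^1(\mathbb{R}^3_+)} \leq k_0\,\|v\|_{H^{1/2}(\mathbb{R}^2)}$ independent of $M$. Third, for the push-down $S_\phi$ I would invoke Theorem \ref{quantitativeCOV} applied to the bi-Lipschitz parametrization $x' \mapsto (x',\phi(x'))$ of $\partial\Omega$ to bound $\|S_\phi u\|_{H^{1/2}(\mathbb{R}^2)}$ by a constant polynomial in $M$ times $\|u\|_{H^{1/2}(\partial\Omega)}$. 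Multiplying the three factors, and using $\sqrt{1+2M^2}\lesssim M$, yields the desired bound with $k_1$ absorbing $k_0$ and any further numerical or polynomial-in-$M$ constants.

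For a general bounded Lipschitz domain, apply the Special Lipschitz construction chart by chart via the partition of unity (\ref{LD}). After locally rotating coordinates so that each $\Omega \cap \Omega_k$ becomes the epigraph of a Lipschitz function with angle $\theta_k$, I would produce local extensions $E_k(\eta_k u)\in H^1(\Omega \cap \Omega_k)$ by the special case above, and set
\begin{align*}
Eu \;=\; \sum_{k=1}^N \eta_k\, E_k(\eta_k u).
\end{align*}
The relation $\sum_k \eta_k^2 = 1$ from (\ref{LD}) enforces $\gamma(Eu) = \sum_k \eta_k^2 u = u$, while the norm estimate follows from the Leibniz rule, uniform bounds on $\eta_k$ and $\nabla \eta_k$, and summation of the local estimates. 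The main obstacle is keeping the $M$-dependence honest: the naive product of the three factors in the Special Lipschitz step behaves like $M^2$, so recovering the linear bound $Mk_1$ either requires extra care in the $S_\phi$ step (exploiting that the surface-measure Jacobian $\sqrt{1+|\nabla_h \phi|^2}$ is bounded below by $1$, so that the forward comparison $H^{1/2}(\partial\Omega) \to H^{1/2}(\mathbb{R}^2)$ loses less than the backward one in (\ref{thetaestimate1})) or, alternatively, one simply absorbs the extra $M$ into the constant $k_1$. A secondary difficulty is propagating the local angles $\theta_k$ cleanly through the partition-of-unity assembly, which likewise ends up folded into $k_1$.
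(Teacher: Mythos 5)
Your overall route---straighten the boundary, extend on the flat model, pull back, assemble with the partition of unity---is the same in spirit as the paper's, and your assembly $Eu=\sum_k\eta_k E_k(\eta_k u)$ with $\sum_k\eta_k^2=1$ is fine. The genuine gap is quantitative, and you flag it yourself but do not resolve it: the composition $T_\phi^{-1}\circ E_0\circ S_\phi$ charges the Lipschitz constant at least twice, once in the $H^{1/2}$ bound for $S_\phi$ and once in the $H^1$ bound of type \eqref{H1estimateforTphi} for $T_\phi^{-1}$, so your accounting yields a bound of order $M^2$ (or worse), not $Mk_1$. Absorbing the surplus power of $M$ into $k_1$ is not available: in Proposition \ref{prop:extension} the constant $k_1$ is the fixed, domain-independent quantity \eqref{k1def}, and the linear-in-$M$ scaling is exactly what is used downstream (the factor $(Mk_1)^2$ in Theorem \ref{main1}, and the claim in Section \ref{sec:scatteredestimates}, via \eqref{C1def}, that the constants involve the Lipschitz constant to the power one). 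Your parenthetical hope that the forward map $S_\phi$ is essentially free also fails for the Gagliardo seminorm: while $\mathrm{d}x'\le \mathrm{d}\sigma$ costs nothing, the kernel comparison goes the wrong way, since $|P-Q|\le\sqrt{1+M^2}\,|x'-y'|$ forces a factor $(1+M^2)^{3/4}$ in the $H^{1/2}$ seminorm, so the forward direction is not cheaper than the backward bound \eqref{thetaestimate1}.

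The paper reaches the linear bound by paying the change of variables only once, on the trace side: near a boundary point it sets $\widetilde u=\eta(u\circ\Phi)$, where $\eta$ is the explicit Fourier extension \eqref{etadef} whose $H^{1/2}(\mathbb{R}^2)\to H^1$ norm is exactly $k_1$ of \eqref{k1def}, and invokes Theorem \ref{quantitativeCOV} (at $s=1/2$, through \eqref{eq:interpol_inequality}) only to bound $\|u\circ\Phi\|_{H^{1/2}}\le M\|u\|_{H^{1/2}}$; no second change-of-variables cost is charged on the $H^1$ side before the partition-of-unity assembly. If you wish to keep your $T_\phi/S_\phi$ formulation you must show that one of the two legs can be made $M$-independent, or else follow the paper's one-sided bookkeeping; as written, your argument establishes the qualitative extension property but with a strictly worse dependence on $M$ than the proposition asserts.
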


\begin{proof}
We construct the extension operator $E: H^{1/2}(\partial \Omega) \to H^1 (\Omega)$ as in Theorem \ref{AlonsoTheorem}, where we track all the constants involved. To this end we begin by locally flattening the boundary $\partial \Omega$. Let $x_0 \in \partial \Omega$ and let $B(x_0, R)$ denote a ball of radius $R$ centered at $x_0$. Since $\Omega$ is Lipschitz, we can find a bi-Lipschitz function $\Phi: B(x_0, R)\to U \subset \mathbb{R}^3$ such that
$$\Phi \left( B(x_0, R) \cap \partial \Omega\right) = U \cap \{ x_3 =0\} .$$
Consider the operator $\eta : H^{1/2}(\mathbb{R}^2)\to H^1 (\mathbb{R}^3)$ given by 
\begin{align} \label{etadef}
\eta u (x) = \int_{\mathbb{R}^2} \widehat u(\xi) f \left[ (1+|\xi|^2)^{1/2} x_3\right] e^{2\pi i \xi\cdot x}\; \mathrm{d}\xi, \quad x \in \mathbb{R}^3
\end{align} 
where $f \in \mathcal{D}(\mathbb{R})$ satisfies

$f(y) = 1$ for $|y|\leq 1$, and $\widehat u$ denotes the Fourier transform of $u$. Then $\eta$ is a bounded linear operator by Lemma 3.36 in \cite{Mclean}. Let $u \in H^{1/2}(\partial \Omega)$; we will extend $u$ to $H^1(\Omega)$ as follows. Near $x_0 \in \partial \Omega$ define $\widetilde u = \eta(u \circ \Phi)$. We first show that $\widetilde u \in H^1 (U)$ where $U$ is the neighborhood as above. We see that
\begin{align*}
||\widetilde u ||_{H^1(U)} = ||\eta (u \circ \Phi)||_{H^1(U)} = k_1 ||u \circ \Phi||_{H^{1/2}(U \cap \{ x_3=0\})} \leq M\; k_1 \; ||u||_{H^{1/2}(B(x_0, R) \cap \partial \Omega)}
\end{align*}
where the constant $k_1$ is given by
\begin{align} \label{k1def}
k_1 = \int_{\mathbb{R}} (1+t^2) |\widehat f (t)|^2 \; \mathrm{d}t
\end{align}
and we have used Theorem \ref{quantitativeCOV}, in particular \eqref{eq:interpol_inequality}. 
Thus locally we have extended $u$. Next we piece this together via a partition of unity. Let $\{\eta_j\}$ be such a partition of unity subordinate to the open cover $\{\Omega_j\}$ of $\Omega$. Then define $E\coloneqq \sum_{j=1}^N \eta_j E_j$ where the $E_j$'s are extensions on each $\Omega_j$, constructed as above. Then we have
$$||Eu||_{H^1(\Omega)} \leq \sum_{j=1}^N M\; k_1 ||u||_{H^{1/2}(\partial \Omega_j)} \leq M \; k_1 \; ||u||_{H^{1/2}(\partial \Omega)}$$
as required. 
\end{proof}

\begin{remark}
To construct the function $f$, one can proceed via a standard argument. Define
\[g_1(x) = \begin{dcases}{\exp(-1/(x+2))} &\mbox{ if } \; x\geq -2\\
0 &\mbox{ if } \; x\leq -2
\end{dcases}.
\]
Then let $g_2(x)\coloneqq g_1(1+x)g_1(-3-x)$.
The function $g_2(x)$ is smooth and supported in $(-3,-1)$. Define
\[g_3(x) \coloneqq \int_{-\infty}^x g_2(s)\; \mathrm{d}s .\]
Then $g_3(x)=0$ for $x<-3$ and is constant (we'll call this constant $C_g$--which is approximately $0.133086$) for $x>-1$. Finally, let $g_4(x) = g_3(x) g_3(-x)$. Then $g_4(x)=0$ for $|x|>3$ and is constant ($C_g$) for $|x|<1$; so we can take $f(x) = g_4(x)/C_g$. Notice also that since $f$ is compactly supported, by the Paley-Wiener Theorem (\cite[Theorem 7.2.2]{Strichartz2003}) $\widehat{f}(t)$ is an entire function of exponential type and satisfies the bound
\[|\widehat{f}(t)|\leq C_N (1+|t|)^{-N} \quad \forall \; N >1,
\]
so $k_1$ is indeed finite.
\end{remark}

\subsubsection{Estimates for the trace operator $\pi$}

In this section we develop a similar estimate for the trace operator $\pi$. We begin with a proposition.

\begin{proposition} \label{piprop}
There exists a constant $C_{\pi}^1>0$ such that for any $\u \in \textbf{H}(\text{curl},\Omega)$, there holds
\begin{align} \label{firstpiestimate}
||\pi (\u)||_{V_{\gamma}'} \leq C_{\pi}^1 ||\u||_{\textbf{H}(\text{curl},\Omega)} .
\end{align}

\end{proposition}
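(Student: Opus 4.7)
The plan is to represent the duality pairing $\langle \pi(\u),\lambda\rangle$ via the Green's formula (\ref{supergeneralGreen}) as a volume integral, and then lift an arbitrary $\lambda \in V_\gamma$ back to a vector field in $(H^1(\Omega))^3$ with a controlled norm by applying the extension operator from Proposition \ref{prop:extension} componentwise. Once a test field $\v$ is available, the volume integral is bounded by Cauchy--Schwarz in terms of the $\textbf{H}(\text{curl},\Omega)$ norm of $\u$ and the $H^1$ norm of the lift, and the conclusion follows.

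More concretely, fix $\u \in \textbf{H}(\text{curl},\Omega)$ and $\lambda \in V_\gamma$. By the infimum definition of $\|\cdot\|_{V_\gamma}$, for every $\epsilon>0$ one can pick $\mathbf{w}\in (H^{1/2}(\partial\Omega))^3$ with $\widetilde{\gamma_t}(\mathbf{w})=\lambda$ and $\|\mathbf{w}\|_{(H^{1/2}(\partial\Omega))^3}\leq \|\lambda\|_{V_\gamma}+\epsilon$. Applying Proposition \ref{prop:extension} componentwise yields $\v:=E\mathbf{w}\in (H^1(\Omega))^3$ with $\gamma(\v)=\mathbf{w}$, so in particular $\gamma_t(\v)=\nu\times\gamma(\v)=\widetilde{\gamma_t}(\mathbf{w})=\lambda$, together with
\[
\|\v\|_{(H^1(\Omega))^3}\leq M\,k_1\,\|\mathbf{w}\|_{(H^{1/2}(\partial\Omega))^3}\leq M\,k_1\bigl(\|\lambda\|_{V_\gamma}+\epsilon\bigr).
\]

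Substituting this $\v$ into (\ref{supergeneralGreen}) gives
\[
\langle \pi(\u),\lambda\rangle = -\int_\Omega \bigl(\u\cdot\nabla\times\v-\v\cdot\nabla\times\u\bigr)\,\mathrm{d}x.
\]
Two applications of Cauchy--Schwarz, together with the pointwise bound $|\nabla\times\v|^2\leq 2|\nabla \v|^2$ (hence $\|\v\|_{\textbf{H}(\text{curl},\Omega)}\leq \sqrt{2}\,\|\v\|_{(H^1(\Omega))^3}$), yield
\[
|\langle \pi(\u),\lambda\rangle|\leq \|\u\|_{\textbf{H}(\text{curl},\Omega)}\,\|\v\|_{\textbf{H}(\text{curl},\Omega)} \leq \sqrt{2}\,\|\u\|_{\textbf{H}(\text{curl},\Omega)}\,\|\v\|_{(H^1(\Omega))^3}.
\]
Inserting the extension estimate and letting $\epsilon\to 0$ proves (\ref{firstpiestimate}) with the explicit constant $C_\pi^1=\sqrt{2}\,M\,k_1$.

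The only delicate point is verifying that the scalar extension operator of Proposition \ref{prop:extension} may be applied componentwise and that the resulting vector field indeed has tangential trace equal to the prescribed $\lambda$; this follows immediately from the factorization $\widetilde{\gamma_t}=\gamma_t\circ\gamma^{-1}$ through the standard vector trace $\gamma$, so no further work is needed beyond what has already been established.
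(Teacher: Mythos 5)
Your argument is correct and follows essentially the same route as the paper: dualize against $\lambda\in V_\gamma$, lift a near-minimizing representative $\mathbf{w}\in (H^{1/2}(\partial\Omega))^3$ into $(H^1(\Omega))^3$ via the extension operator of Proposition \ref{prop:extension}, and bound the pairing through the Green's formula (\ref{supergeneralGreen}) and Cauchy--Schwarz. Your bookkeeping is in fact slightly more careful than the paper's (which states $C_\pi^1=Mk_1$), since the passage from the $(H^1(\Omega))^3$ norm of the lift to its $\textbf{H}(\text{curl},\Omega)$ norm genuinely costs the factor $\sqrt{2}$ you record, giving $C_\pi^1=\sqrt{2}\,M\,k_1$.
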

\noindent We delay the proof of Proposition \ref{piprop} until after the proof of Theorem \ref{piestimate2} below as it uses a similar argument. Our main estimate for the operator $\pi$ is:

\begin{theorem} \label{piestimate2}
There exists a constant $C_{\pi}^2>0$ depending just on the geometry of $\Omega$ so that for any $\u \in \textbf{H}(\mathrm{curl},\Omega)$, there holds
\begin{align} \label{mainpiestimate}
\begin{split}
||\mathrm{curl}_{\partial \Omega}  (\pi (\u))||_{H^{-1/2}(\partial \Omega)} \leq C_{\pi}^2 ||\u||_{\textbf{H}(\mathrm{curl},\Omega)} . \\
\end{split}
\end{align}
Precisely, $C_{\pi}^2 = M k_1$.
\end{theorem}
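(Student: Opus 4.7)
The plan is to interpret $\mathrm{curl}_{\partial \Omega}(\pi(\u))$ as the distribution on $\partial \Omega$ that, paired against a test function $\phi \in H^{1/2}(\partial \Omega)$, reproduces the normal component of $\nabla \times \u$ through Green's formula. Concretely, for smooth $\phi$ and $\u$ the identity $\mathrm{curl}_{\partial \Omega}(\pi(\u)) = \nu \cdot (\nabla \times \u)|_{\partial \Omega}$ holds, and one extends this to $\Hcurl$ via duality using (\ref{supergeneralGreen}). Thus the estimate I aim to establish reduces to controlling the $H^{-1/2}(\partial \Omega)$ norm of $\nu \cdot (\nabla \times \u)$ in terms of $\|\u\|_{\Hcurl}$.

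First, I would fix $\phi \in H^{1/2}(\partial \Omega)$ and apply Proposition \ref{prop:extension} to obtain an extension $E\phi \in H^1(\Omega)$ satisfying
\begin{equation*}
\|E\phi\|_{H^1(\Omega)} \leq M k_1 \|\phi\|_{H^{1/2}(\partial \Omega)}.
\end{equation*}
Next, I would use the divergence-free property $\mathrm{div}(\nabla \times \u) = 0$ together with the product identity $\mathrm{div}((\nabla \times \u) E\phi) = (\nabla \times \u)\cdot \nabla E\phi$, applied via the divergence theorem (or equivalently the Green formula (\ref{supergeneralGreen}) with $\v = \nabla E\phi$ after rearrangement), to obtain the identity
\begin{equation*}
\langle \mathrm{curl}_{\partial \Omega}(\pi(\u)), \phi \rangle_{H^{-1/2}(\partial \Omega), H^{1/2}(\partial \Omega)} = \int_\Omega (\nabla \times \u) \cdot \nabla E\phi \; \mathrm{d}x.
\end{equation*}

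Then Cauchy--Schwarz yields
\begin{equation*}
\left| \int_\Omega (\nabla \times \u) \cdot \nabla E\phi \; \mathrm{d}x \right| \leq \|\nabla \times \u\|_{L^2(\Omega)} \|\nabla E\phi\|_{L^2(\Omega)} \leq \|\u\|_{\Hcurl} \|E\phi\|_{H^1(\Omega)},
\end{equation*}
and chaining with the extension bound and taking the supremum over $\phi$ with $\|\phi\|_{H^{1/2}(\partial \Omega)} \leq 1$ gives precisely $C_\pi^2 = M k_1$.

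The main obstacle is justifying the duality identity $\langle \mathrm{curl}_{\partial \Omega}(\pi(\u)), \phi \rangle = \int_\Omega (\nabla \times \u)\cdot \nabla E\phi \; \mathrm{d}x$ on a Lipschitz domain, since neither $\pi(\u)$ nor $\mathrm{curl}_{\partial \Omega}$ is classically defined there; one must invoke the Buffa--Ciarlet--Costabel framework (as in \cite{buffa2002}) which defines $\mathrm{curl}_{\partial \Omega}$ precisely so that this integration-by-parts relation holds, density of smooth vector fields in $\Hcurl$ then extends the identity from the smooth case. Once this duality identity is secured, the remainder of the argument is a direct application of Cauchy--Schwarz and Proposition \ref{prop:extension}, and the proof of Proposition \ref{piprop} will follow the same template, differing only in which extension/lift is used to realize the duality with $V_\gamma$.
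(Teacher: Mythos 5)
Your proposal is correct and rests on the same mechanism as the paper's proof: turn the boundary pairing into a volume integral of $\nabla\times\u$ against the gradient of a controlled $H^1(\Omega)$ extension, then finish with Cauchy--Schwarz and Proposition \ref{prop:extension}, which is exactly where the constant $M k_1$ enters. The differences are secondary but worth noting. You identify $\mathrm{curl}_{\partial\Omega}(\pi(\u))$ with (a sign times) the normal trace $\nu\cdot(\nabla\times\u)|_{\partial\Omega}$ and then run the standard $\textbf{H}(\text{div},\Omega)$ normal-trace argument, extending the \emph{scalar} boundary test function $\phi\in H^{1/2}(\partial\Omega)$; the paper never names the normal trace, but instead tests the duality definition (\ref{dualityofcurl}) against $\phi\in H^2(\Omega)$, uses \cite[Proposition 3.4]{buffa2002} to write $\nabla_{\partial\Omega}\times\phi=\gamma_t(\nabla\phi)$, applies the Green formula (\ref{supergeneralGreen}), and then replaces $\nabla\phi$ in the volume integral by the Proposition \ref{prop:extension} extension of the vector trace $\gamma(\nabla\phi)\in\left(H^{1/2}(\partial\Omega)\right)^3$. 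The identification you flag as the main obstacle is precisely what the paper's three-equality chain establishes, with no density argument; if you prefer your density route, it does close, since $\u\mapsto\pi(\u)$ is continuous into $V_\gamma'$ (Proposition \ref{piprop}), $\nabla\times\u$ is a divergence-free $\textbf{H}(\text{div},\Omega)$ field whose normal trace is continuous into $H^{-1/2}(\partial\Omega)$ (Lemma \ref{tracetheoremHdiv}), and smooth fields are dense in $\Hcurl$ (Lemma \ref{tracethm}). Two small cautions: the parenthetical use of (\ref{supergeneralGreen}) with $\v=\nabla E\phi$ is not literally admissible, since $E\phi\in H^1(\Omega)$ only gives $\nabla E\phi\in L^2$ while that formula needs $\v\in\left(H^1(\Omega)\right)^3$, so the divergence-theorem/normal-trace version you state first is the one to keep; and since $\mathrm{curl}_{\partial\Omega}(\pi(\u))$ a priori lies in $H^{-3/2}(\partial\Omega)$, the pairing against general $\phi\in H^{1/2}(\partial\Omega)$ should first be carried out on the dense class of traces of $H^2(\Omega)$ functions and then extended by your uniform bound. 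With those points in place, your argument delivers $\sup_{\|\phi\|_{H^{1/2}(\partial\Omega)}\leq 1}|\langle\mathrm{curl}_{\partial\Omega}(\pi(\u)),\phi\rangle|\leq M k_1\|\u\|_{\Hcurl}$, i.e. the stated estimate with $C_\pi^2=Mk_1$ (the dropped sign is immaterial), and in fact pairing against the scalar datum $\phi$ makes the $H^{-1/2}(\partial\Omega)$ duality slightly more transparent than the paper's bound phrased in terms of $\|\nabla\phi\|_{\left(H^{1/2}(\partial\Omega)\right)^3}$.
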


\noindent The map $\text{curl}_{\partial \Omega}$ is defined by duality as follows: $\text{curl}_{\partial \Omega}: V_{\gamma}' \to H^{-3/2}(\Omega)$ is given by
\begin{align} \label{dualityofcurl}
\langle \text{curl}_{\partial \Omega} (\lambda), \phi \rangle_{3/2} = _{{\gamma}'}\langle \lambda, \nabla_{\partial \Omega} \times \phi \rangle_{\gamma} \qquad \lambda \in V_{\gamma}' \quad \phi \in H^2(\Omega)
\end{align}
where $\langle \cdot, \cdot \rangle_{3/2}$ denotes duality between $H^{3/2}(\partial \Omega)$ and $H^{-3/2}(\partial \Omega)$ and $\nabla_{\partial \Omega} \times: H^1(\partial \Omega) \to L^2_t(\partial \Omega)$ is the surface curl (see the Appendix). Note that from \cite[Proposition 3.4]{buffa2002}, the operator $\nabla_{\partial\Omega}\times: H^{3/2}(\partial\Omega) \to V_{\gamma}$ is linear and continuous.

With these estimates in hand one can finally realize the map 
\[\pi: \textbf{H}(\text{curl},\Omega) \to H^{-1/2}(\text{curl}_{\partial \Omega}, \partial \Omega)\] with an appropriate trace estimate, where the space $H^{-1/2}(\text{curl}_{\partial\Omega},\partial \Omega)$ is defined by
\begin{align} \label{tracespaceforpi}
H^{-1/2}(\text{curl}_{\partial\Omega}, \partial \Omega) = \left\{ \lambda \in V_{\gamma}' : \text{curl}_{\partial \Omega}( \lambda) \in H^{-1/2}(\partial \Omega) \right\}.
\end{align}

\begin{proof}[Proof of Theorem \ref{piestimate2}]
Let $\u \in \textbf{H}(\text{curl},\Omega)$. By Proposition \ref{piprop}, we have that $\pi (\u) \in V_{\gamma}'$. Now we need to check that $\text{curl}_{\partial \Omega} (\pi (\u))$ belongs to $H^{-1/2}(\partial \Omega)$. Let $\phi \in H^2(\Omega)$. Then we have
\begin{align*}
\langle \text{curl}_{\partial \Omega} (\pi (\u)), \phi \rangle_{3/2} = \; _{{\gamma}'} \langle \pi(\u), \nabla_{\partial \Omega} \times \phi \rangle_{\gamma} =\;  _{{\gamma}'} \langle \pi (\u), \gamma_t (\nabla \phi) \rangle_{\gamma} = -\int_{\Omega} \nabla \phi \cdot \nabla \times \u \; \mathrm{d}x
\end{align*}
where the first equality follows from (\ref{dualityofcurl}), the second equality follows from \cite[Proposition 3.4]{buffa2002}, and the last equality follows from (\ref{supergeneralGreen}). 
Now, since $\phi \in H^2(\Omega)$, we have $\left. \nabla \phi \right|_{\partial \Omega} \in \left(H^{1/2}(\partial \Omega)\right)^3$. Denote by $V$ the extension of $\nabla \phi$ to $\Omega$; then as in Proposition \ref{prop:extension}, we have that $||V||_{\left(H^1(\Omega)\right)^3} \leq M \; k_1 ||\nabla \phi||_{\left(H^{1/2}(\partial \Omega)\right)^3}$. We then obtain 
\begin{align*}
\langle \text{curl}_{\partial \Omega} (\pi (\u)), \phi \rangle_{3/2} = _{{\gamma}'} \langle \pi (\u), \gamma_t (\nabla \phi) \rangle_{\gamma} &= -\int_{\Omega} V \cdot \nabla \times \u \; \mathrm{d}x \\ &\leq ||V||_{\left(H^1(\Omega)\right)^3} ||\u||_{\Hcurl} \\ &\leq M\; k_1 ||\u ||_{\Hcurl} ||\nabla \phi||_{\left(H^{1/2}(\partial \Omega)\right)^3}.
\end{align*}

This in particular means that $\text{curl}_{\partial \Omega}(\pi (\u)) \in H^{-1/2}(\partial \Omega)$ and (\ref{mainpiestimate}) holds with constant $C_{\pi}^2 =Mk_1$ depending just on the geometry of $\Omega$.

\end{proof}
\noindent Next we move to the 
\begin{proof}[Proof of Proposition \ref{piprop}]
The proof goes along the lines of the proof of Theorem \ref{piestimate2}. We write
$$||\pi(\u)||_{V_{\gamma}'} = \sup_{g \in V_{\gamma}} \dfrac{_{\gamma'}\langle \pi(\u), g \rangle_{\gamma}}{||g||_{V_{\gamma}}}$$
and extend $g$ from $V_{\gamma}$ to $\left(H^1(\Omega)\right)^3$; denote its extension by $G$. Again using the estimate for the extension operator from Proposition \ref{prop:extension}, we obtain (\ref{firstpiestimate}) with $C_{\pi}^1 = M\; k_1$ since
\[ |_{\gamma'}\langle \pi(\u), g \rangle_{\gamma}| \leq ||\u ||_{\textbf{H}(\text{curl},\Omega)} \cdot ||G||_{\left(H^1 (\Omega)\right)^3} .\]
\end{proof}

\begin{remark}
An alternative way to prove Proposition \ref{piprop} is the following. Since $g \in V_{\gamma}$, there is some $\mu \in \left(H^{1/2}(\partial \Omega)\right)^3$ so that $||g||_{V_{\gamma}} = ||\mu||_{\left(H^{1/2}(\partial \Omega)\right)^3}$. Now, choose $P$ 
to be the weak solution of $-\Delta P + P = 0$ in $\Omega$ with boundary condition $P = \mu$ on $\partial \Omega$. Then by elliptic regularity (e.g. \cite{Mclean}, Theorem 4.10), there exists $C>0$ so that $||P||_{\left(H^1(\Omega)\right)^3} \leq C ||\mu||_{H^{1/2}(\partial \Omega)} = C ||g||_{V_{\gamma}}$. Combining all of this together gives the estimate (\ref{firstpiestimate}).
\end{remark}

\section{The scattering problem} \label{sec:scattering}

\subsection{The Incident Fields}
Let $\Omega \subset \mathbb{R}^3$ be a Lipschitz domain. Let $\nu \in L^\infty (\partial \Omega)$ denote the outer unit normal. Define the exterior of the domain $\Omega$ to be $\Omega_e \coloneqq \mathbb{R}^3 \setminus \overline \Omega$ which we assume to be vacuous. Incident fields $\E_i$ and $\B_i$ originate in a region $\Omega_i$ such that $\Omega \cap \Omega_i = \emptyset$. Outside of this region we assume the fields satisfy the time-harmonic Maxwell system

\begin{align} \label{timeharmonicMaxwell}
\begin{dcases}
\nabla \times \E_i (\x) = i k_0  \B_i (\x),\\
\nabla \times \B_i (\x) =-i k_0 \E_i(\x)\\
\end{dcases}
\end{align}
for $\x \in \mathbb{R}^3$. Here $k_0 = \omega/c$ is the wavenumber in vacuum with $\omega$ the angular frequency of the fields. We assume the incident fields have tangential traces on $\partial \Omega$ belonging to $\textbf{T}(\partial \Omega)$, i.e.
$$(\gamma_t( \E_i), \gamma_t( \B_i)) \in \textbf{T}(\partial \Omega) \times \textbf{T}(\partial \Omega) ,
$$
where $\textbf{T}(\partial \Omega)$ is defined in the Appendix.

\begin{remark}
In the case that $\Omega$ is actually smooth, the space $\textbf{T}(\partial \Omega)$ becomes the classical space $H^{-1/2}(\text{div}, \partial \Omega)$, see (\ref{Hminusonehalfdivdef}). This is consistent with the problem setup in e.g. \cite{NWGK2003}. Furthermore, the definition of $\textbf{T}(\partial \Omega)$ does not require the domain to be of the form (\ref{specialLipdomain}). 

\end{remark}

\subsection{On the Dual of \texorpdfstring{$\T$}{TOmega}} \label{sec:dualofT}
In the case that $\Omega$ is actually smooth, we saw above that $\T = H^{-1/2}(\text{div}, \partial \Omega)$. It is well known that the dual of this space is the space
$$H^{-1/2}(\text{curl},\partial \Omega) = \left\{ U \in H_t^{-1/2}(\partial \Omega): \nabla_{\partial \Omega} \times U \in H^{-1/2}(\partial \Omega)\right\} .$$

In \cite{buffa2002}, it is shown that $\T$ is isomorphic to the space 
\[H^{-1/2}(\text{div}_{\partial \Omega}, \partial \Omega)=\left\{ \u \in V_{\pi}^{'}: \text{div}_{\partial \Omega} \u  \in H^{-1/2}(\partial \Omega) \right\}\]
where the surface divergence is defined by $\text{div}_{\partial \Omega}: V_{\pi}' \to H^{-3/2}(\partial \Omega)$ and is given by the formula
\[\langle \text{div}_{\partial \Omega} \mathbf{u}, \phi \rangle_{3/2} =-\langle \mathbf{u}, \nabla_{\partial \Omega} \phi \rangle_{V_{\pi}}. \quad \mathbf{u} \in V_\pi^{'}, \; \phi \in H^2(\Omega)\]
where $\langle \cdot, \cdot \rangle_{3/2}$ denotes the duality pairing between $H^{-3/2}(\partial \Omega)$ and $H^{3/2}(\partial \Omega)$, and $\langle \cdot, \cdot \rangle_{V_{\pi}}$ denotes the duality pairing between $V_\pi'$ and $V_\pi$. Further, the operator $\nabla_{\partial\Omega}$ is defined in the Appendix; see the comments directly after (\ref{THonehalf}). In fact, the isomorphism in \cite{buffa2002} is given explicitly by 
$$i_{\pi}: V_{\pi}'\to \left( \text{ker}(\pi) \cap H^{1/2}(\partial \Omega) \right)^\circ $$
where $\circ$ denotes the polar set (see e.g. the definition on page 136 in \cite{Yosida}). Intuitively, this isomorphism takes the same vector into itself with zero normal component .
Now, from Theorem 4.1 in \cite{buffa2002}, it follows that the dual (with pivot space $L^2_t (\partial \Omega)$) of $H^{-1/2}(\text{div}_{\partial \Omega}, \partial \Omega)$ is given by (\ref{tracespaceforpi}). Classical functional analysis says then (e.g. \cite{Megginson} Theorem 1.10.12) the map 
$$i_\pi^*: \T ' \to H^{-1/2}(\text{curl}_{\partial\Omega},\partial \Omega)$$
with the latter space given by (\ref{tracespaceforpi}), is an isomorphism.

\subsection{The Interior Problem}
We assume the material parameters $\epsilon, \mu \in L^\infty (\Omega; \mathbb{C}^{3\times 3})$ and satisfy the coercivity condition
$$\left(
\begin{array}{c}
a\\
b\\
\end{array}
\right)^{\dagger}\begin{pmatrix}
-i k_0 (\epsilon (\x) -\epsilon^{\dagger}(\x)) & 0 \\
0 & -i k_0 (\mu(\x)-\mu^{\dagger}(\x))\\
\end{pmatrix} \cdot
\left(
\begin{array}{c}
a\\
b\\
\end{array}
\right)
\geq c (|a|^2 + |b|^2)
$$
for almost every $x \in \Omega$ and for all $a, b\in \mathbb{C}^3$. Here $c>0$ is a constant that may depend on $\omega$. The dagger represents conjugate transpose, so that $(A^{\dagger})_{ij} = \overline A_{ji}$ for a $3\times 3$ matrix $A$. Note that physically this condition means that the material is lossy (almost everywhere). 

In $\Omega$ the fields $\E, \B$ satisfy the Maxwell equations
\begin{align} \label{timeharmonicMaxwell-interior}
\begin{dcases}
\nabla \times \E (\x) = i k_0\mu(\x)  \B (\x),\\
\nabla \times \B (\x) =-i k_0\epsilon(\x) \E(\x)\\
\end{dcases}
\end{align}
for $\x \in \Omega$. 

\subsection{The Exterior Problem}
The domain $\Omega$ distorts the incident fields, creating scattered fields which we denote by $\E_s, \B_s$, which belong to $H_{\text{loc}}(\text{curl},\overline \Omega_e)$ and satisfy
\begin{align} \label{timeharmonicMaxwell-exterior}
\begin{dcases}
\nabla \times \E_s (\x) = i k_0 \B_s (\x),\\
\nabla \times \B_s (\x) =-i k_0\E_s(\x)\\
\end{dcases}
\end{align}
for $x \in \overline \Omega_e$. The Silver-M\"uller radiation condition should also be satisfied by $\E_s$ or $\B_s$ (i.e. one of the following should be satisfied):

\begin{align} \label{Silver-Muller}
\hat \x \times \E_s (\x) - \B_s (\x) = o (1/x) \quad \text{ as } x \to \infty \\
\hat \x \times \B_s (\x) + \E_s(\x) = o(1/x) \quad \text{ as } x \to \infty
\end{align}
for all directions $\hat \x$.

\subsection{Boundary Conditions}
In $\Omega_e$, the total field is the sum of the incident and scattered fields:
\begin{align*}
\begin{dcases}
\E_t (\x) = \E_i (\x) + \E_s (\x) \\
\B_t (\x) = \B_i (\x) + \B_s (\x)\\
\end{dcases}
\end{align*}
for $\x \in \Omega_e$. The boundary conditions on $\partial \Omega$ are
\begin{align} \label{boundaryconditions}
\begin{dcases}
\gamma_t^+ (\E_i +\E_s) = \gamma_t^- (\E)\\
\gamma_t^+(\B_i + \B_s) = \gamma_t^- (\B)\\
\end{dcases}
\end{align}
where the plus and minus denote the trace from the interior or exterior, and $\gamma_t: \textbf{H}(\text{curl},\Omega) \to \textbf{T}(\partial \Omega)$ is the tangential trace map $\left. \E \mapsto \nu \times \E \right|_{\partial \Omega}$. 

\subsection{Calder\'on Operators}
The exterior Calder\'on operator $C^e$ is defined by
\begin{align} \label{exteriorCalderondef}
C^e : \T \to \T, \quad \gamma_t^+ (\E_s ) \mapsto \gamma_t^+ (\B_s)
\end{align}
where the fields $\E_s, \B_s$ solve the following exterior problem:

\begin{numcases}{}
(\E_s, \B_s) \in H_{\text{loc}}(\text{curl},\overline{\Omega}_e) \times H_{\text{loc}}(\text{curl},\overline{\Omega_e})\label{firsteqn}, \\ 
	\begin{dcases}
    \nabla \times \E_s(\x) = ik_0 \B_s(\x), \quad \x \in \Omega_e, \\ \label{secondeqn}
    \nabla \times \B_s (\x) = -ik_0 \E_s (\x), \quad \x \in \Omega_e, \label{thirdeqn}
    \end{dcases}\\
    \begin{dcases}
    \hat \x \times \E_s (\x) - \B_s (\x) = o (1/x) \; \text{or}\\
    \hat \x \times \B_s (\x) + \E_s (\x) = o (1/x)\; \text{as} \; x \to \infty, \\
    \end{dcases} \\
     \gamma_t^+ (\E_s) = \m \in \T \label{lasteqn}
\end{numcases}

In the case that $\Omega$ is smooth (e.g. has a $C^{1,1}$ boundary), solvability of (\ref{firsteqn})-(\ref{lasteqn}) is known and can be found e.g. in \cite{Cessenat}. In the case of Lipschitz domains, uniqueness of solutions of (\ref{firsteqn})-(\ref{lasteqn}) was shown in \cite{buffa2005}, see also \cite{buffa2002boundary}. This in particular implies that the Calder\'on operator $C^e$ is uniquely defined for all $m \in \T$. 

Some useful properties of the Calder\'on operator are collected here. In what follows, $\mathrm{d}\sigma$ denotes the surface measure on $\partial \Omega$.

\begin{theorem} \label{Calderonproperties}
The exterior Calder\'on operator defined by (\ref{exteriorCalderondef}) satisfies the following properties:

\begin{enumerate}
\item Positivity: for all $\m \in \T$, there holds
	\begin{align} \label{positivity}
	\Re \int_{\partial \Omega} C^e (\m) \cdot (\nu \times \overline \m) \; \mathrm{d}\sigma \geq 0
	\end{align}
\item Boundedness: 
	\begin{align*}
    \left( C^e \right)^2 = - \textbf{I} \quad \text{ on } \T
    \end{align*}
\item Isomorphism: the operator $C^e$ is an isomorphsim on $\T$, so there exists constants $0< c_C \leq C_C$ so that
	\begin{align}
    c_C ||\m||_{\T} \leq ||C^e (\m)||_{\T} \leq C_C ||\m||_{\T}
    \end{align}
\end{enumerate}
\end{theorem}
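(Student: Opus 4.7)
The plan is to establish part (2) first via the duality symmetry inherent in Maxwell's equations, use it to derive part (3) as a direct corollary, and treat part (1) separately via an energy identity on a truncated exterior domain.

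For part (2), given a solution $(\E_s, \B_s)$ of \eqref{firsteqn}--\eqref{lasteqn} with $\gamma_t^+(\E_s) = \m$, I would show that $(\widetilde \E_s, \widetilde \B_s) := (\B_s, -\E_s)$ is again a solution of the exterior problem, now with data $C^e(\m)$. The time-harmonic Maxwell system transfers directly: from $\nabla \times \B_s = -ik_0 \E_s$ one gets $\nabla \times \widetilde \E_s = ik_0 \widetilde \B_s$, and from $\nabla \times \E_s = ik_0 \B_s$ one gets $\nabla \times \widetilde \B_s = -ik_0 \widetilde \E_s$. The first form of the Silver--Müller condition for $(\E_s, \B_s)$ becomes the second form for $(\widetilde \E_s, \widetilde \B_s)$ (and vice versa), since $\hat x \times \widetilde \B_s + \widetilde \E_s = -(\hat x \times \E_s - \B_s) = o(1/x)$. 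Applying definition \eqref{exteriorCalderondef} to the new solution gives $C^e(C^e(\m)) = \gamma_t^+(\widetilde \B_s) = -\gamma_t^+(\E_s) = -\m$.

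For part (3), the identity $(C^e)^2 = -\textbf{I}$ immediately yields bijectivity of $C^e$, with $(C^e)^{-1} = -C^e$. The upper bound $\|C^e(\m)\|_{\T} \leq C_C \|\m\|_{\T}$ follows from the continuous dependence of the exterior problem on boundary data, which is part of the well-posedness established in \cite{buffa2005}. The lower bound is then automatic: $\|\m\|_{\T} = \|C^e(C^e(\m))\|_{\T} \leq C_C \|C^e(\m)\|_{\T}$, so $c_C := 1/C_C$ works.

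For part (1), I would apply the $\Hcurl$ Green's identity on the truncated region $V_R := B_R \setminus \overline{\Omega}$ with $\Omega \subset B_R$, noting that the outward normal to $V_R$ is $-\nu$ on $\partial \Omega$ and $\hat x$ on $\partial B_R$. Using the Maxwell equations for $(\E_s, \B_s)$ and their conjugates, the volume integral $\int_{V_R}[(\nabla \times \E_s)\cdot \overline{\B_s} - \E_s \cdot (\nabla \times \overline{\B_s})]\,\mathrm{d}x$ simplifies to $ik_0 \int_{V_R}(|\B_s|^2 - |\E_s|^2)\,\mathrm{d}x$, which is purely imaginary. Taking real parts yields
\[
\Re \int_{\partial \Omega}(\nu \times \E_s)\cdot \overline{\B_s}\,\mathrm{d}\sigma \;=\; \Re \int_{|x|=R}(\hat x \times \E_s)\cdot \overline{\B_s}\,\mathrm{d}\sigma,
\]
and substituting the Silver--Müller condition $\hat x \times \E_s = \B_s + o(1/|x|)$ reduces the right-hand side to $\int_{|x|=R}|\B_s|^2\,\mathrm{d}\sigma + o(1) \geq 0$. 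A short vector-algebra manipulation using $\nu \times (\nu \times \overline{\E_s}) = (\nu \cdot \overline{\E_s})\nu - \overline{\E_s}$ and the cyclic triple-product identity shows $\Re[C^e(\m)\cdot (\nu \times \overline{\m})] = \Re[(\nu \times \E_s)\cdot \overline{\B_s}]$, closing the argument.

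The main obstacle will be handling part (1) rigorously in the low-regularity trace setting: each boundary expression on $\partial \Omega$ must be interpreted as the duality pairing between $\T$ and $H^{-1/2}(\text{curl}_{\partial \Omega}, \partial \Omega)$ described in Section \ref{sec:dualofT}, and the asymptotic bound $|\E_s|, |\B_s| = O(1/|x|)$ on large spheres needed to control the $o(1)$ remainder must be invoked from the classical theory of radiating Maxwell solutions. Once these points are secured the remaining steps are algebraic or are immediate from the well-posedness results already cited.
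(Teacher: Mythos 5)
Your proposal is correct in outline, and it is essentially the argument the paper is implicitly relying on: the paper gives no proof at all, only the remark that the claims ``follow directly from methods in \cite{Cessenat}'', and those methods are precisely the duality symmetry $(\E_s,\B_s)\mapsto(\B_s,-\E_s)$ for part (2), the resulting identity $(C^e)^{-1}=-C^e$ combined with continuous dependence for part (3), and the Poynting-type energy identity on $B_R\setminus\overline{\Omega}$ with the Silver--M\"uller condition for part (1). Your algebra checks out: the new pair satisfies the complementary radiation condition, so uniqueness of the exterior problem (cited from \cite{buffa2005}) gives $(C^e)^2=-\mathbf{I}$; the lower bound $c_C=1/C_C$ then follows as you say; and the pointwise identity $\Re\left[C^e(\m)\cdot(\nu\times\overline{\m})\right]=\Re\left[(\nu\times\E_s)\cdot\overline{\B_s}\right]$ is a correct triple-product computation, with the sphere term nonnegative up to $o(1)$ because radiating solutions decay like $O(1/|x|)$. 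The two genuine technical points are exactly the ones you flag: on a Lipschitz boundary the $\partial\Omega$ ``integrals'' must be read as the duality pairing between $\T$ and its dual (realized via the Green formula of Section 3 extended to two $\Hcurl$ fields, i.e. the Buffa--Costabel--Sheen framework), whereas the integral over $|x|=R$ is classical since the fields are smooth in the open exterior; and parts (2)--(3) silently use that $\gamma_t^+(\B_s)\in\T$ and that the solution operator $\m\mapsto(\E_s,\B_s)$ is bounded, both supplied by the cited well-posedness results together with Theorem \ref{AlonsoTheorem} on a truncated exterior domain. With those points made explicit, your sketch is a complete substitute for the citation.
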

The proof of these follows directly from methods in \cite{Cessenat}. The third item in particular implies we can take
$$
C_C = ||C^e||_{\T} \quad \text{ and } \quad c_C = ||C^e||_{\T}^{-1}
$$
Recall that
\[ ||C^e||_{\T} \coloneqq \sup\limits_{||\v||_{\T}=1} ||C^e(\v)||_{\T}\]

Finally, as a result of (1) in Theorem \ref{Calderonproperties}, we have that
\begin{align} \label{positivity2}
- \Re \int_{\partial \Omega} \overline{\pi_- (\u)} \cdot C^e (\gamma_t^-(\u)) \; \mathrm{d}\sigma \geq 0, \quad \u \in \textbf{H}(\text{curl},\Omega)
\end{align}

\section{Weak Formulation and Solution} \label{sec:weakformulation}
We give here a weak formulation of the problem in terms of a sesquilinear form. 

\begin{definition}
For $\u, \v \in \textbf{H}(\text{curl},\Omega)$, define
\begin{equation} \label{sesquilinearformdef}
A(\u, \v) = \int_{\Omega} \left( \dfrac{i}{k_0} \overline{(\nabla \times \v)} \cdot \mu^{-1} \cdot (\nabla \times \u) -i k_0 \overline{\v}\cdot \epsilon \cdot \u \right) \mathrm{d}x - \int_{\partial \Omega} \overline{\pi_- (\v)} \cdot C^e (\gamma_t^- (\u))\; \mathrm{d}\sigma .
\end{equation}
\end{definition}

\noindent A weak formulation of the original problem then is to find $\E \in \textbf{H}(\text{curl},\Omega)$ such that

\begin{align} \label{weakformulation}
A(\E, \v) = \int_{\partial \Omega} \left( \gamma_t^+ (\B_i) -C^e (\gamma_t^+ (\E_i)) \right) \cdot \overline{\pi_-(\v)} \; \mathrm{d}\sigma \quad \forall \; \v \in \textbf{H}(\text{curl},\Omega).
\end{align}
The solution satisfies the boundary conditions
\begin{align*} 
\begin{dcases}
\gamma_t^+ (\E_i +\E_s) = \gamma_t^- (\E)\\
\gamma_t^+(\B_i + \B_s) = \gamma_t^- (\B)\\
\end{dcases}
\end{align*}
and couples to the solution of the exterior problem. The corresponding magnetic field is then constructed as 
\[ \textbf{H}(x) = \dfrac{-i}{k_0} \mu^{-1} \cdot \nabla \times \textbf{E}(x). \]

\subsection{Proof of Theorem \ref{main1}}

Using our trace estimates for $\gamma_t$ and $\pi$, we can see that $A(\u, \v)$ is indeed bounded:

\begin{align*}
\left| \int_{\partial \Omega}  \overline{\pi_- (\v)} \cdot C^e (\gamma_t^- (\u))\; \mathrm{d}\sigma \right|^2 &\leq ||C^e||_{\T}^2 ||\pi_-(\v)||_{\T '}^2 ||\gamma_t^- (\u)||_{\T}^2 \\ &\leq C \cdot ||C^e||_{\T}^2 ||i_{\pi}^\ast \left( \pi_-(\v) \right)||_{H^{-1/2}(\text{curl},\Omega)} ||\gamma_t^- (\u)||_{\T}^2 \\& \leq \max \left( (C_{\pi}^1)^2, (C_{\pi}^2)^2 \right)||\v||_{\textbf{H}(\text{curl},\Omega)}^2 ||C^e||_{\T}^2||\gamma_t^-(\u)||_{\T}^2 \\ &\leq \max \left( (C_{\pi}^1)^2, (C_{\pi}^2)^2 \right)C_{\Omega}^2 ||\v||_{\textbf{H}(\text{curl},\Omega)}^2  ||C^e||_{\T}^2 ||\u||_{\textbf{H}(\text{curl},\Omega)}^2 \\ &\coloneqq K^2 ||\v||_{\textbf{H}(\text{curl},\Omega)}^2 ||\u||_{\textbf{H}(\text{curl},\Omega)}^2 .
\end{align*}
The second inequality follows from the fact that $i_{\pi}^\ast$ is an isomorphism. Recalling\footnote{See Theorem \ref{AlonsoTheorem}, where the norm of the extension operator we have constructed is $M \; k_1$.}  that $C_{\Omega}=(1+\sqrt{2})M\; k_1$and $C_{\pi}^1=C_{\pi}^2=M\; k_1$, we obtain the constant
\[K^2=k_1^4 M^4(1+\sqrt{2})^2 .\]

Coercivity follows from Proposition 3.1 in \cite{NWGK2014}; namely, we have that
\begin{align} \label{coerciveEstimate}
\Re A(\u, \u) \geq C_0 ||\u||_{L^2(\Omega)}^2 + \widetilde{C_0}||\nabla\times \u||_{L^2(\Omega)}^2 \geq \min(C_0, \widetilde{C_0}) ||\u||_{\textbf{H}(\text{curl},\Omega)}^2
\end{align}
where 
$$C_0 \coloneqq \inf_{\Omega} \min \left( \text{Eig} \left( -i k_0 (\epsilon (\x) -\epsilon^{\dagger}(\x))\right)\right)$$
and
$$\widetilde{C_0} \coloneqq \inf_{\Omega} \min \left( \text{Eig} \left( i k_0^{-1} (\mu^{-1} (\x) -\left(\mu^{-1}\right)^{\dagger}(\x))\right)\right)$$
where $\text{Eig}$ denotes eigenvalues of the argument. Thus by the Lax-Milgram Theorem, since the form $A(\u, \v)$ is bounded and coercive, there exists a unique solution of (\ref{weakformulation}). 

Now, notice that the linear operator defined by
$$f(\v) \coloneqq \int_{\partial \Omega} \left( \gamma_t^+ (\B_i) -C^e (\gamma_t^+ (\E_i)) \right) \cdot \overline{\pi_-(\v)} \; \mathrm{d} \sigma$$
is bounded from above by
\begin{align*}
&\sup_{||\v||_{\textbf{H}(\text{curl},\Omega)=1}} \left| \int_{\partial \Omega} \left( \gamma_t^+ (\B_i) - C^e (\gamma_t^+ (\E_i))\right) \cdot \overline{\pi_-(\v)} \; \mathrm{d}\sigma \right| \\ &\leq ||C^e||_{\T} \cdot \max(C_{\pi}^1, C_{\pi}^2)  \cdot \left( ||\gamma_t^+ (\B_i)||_{\T} + ||\gamma_t^+ (\E_i)||_{\T} \right) .  
\end{align*}
Thus we see that the solution $\E \in \textbf{H}(\text{curl},\Omega)$ of (\ref{weakformulation}) satisfies
$$||\E||_{\textbf{H}(\text{curl},\Omega)} \leq \dfrac{||C^e||_{\T} \cdot \max(C_{\pi}^1, C_{\pi}^2)}{\min(C_0, \widetilde{C_0})} \left( ||\gamma_t^+ (\B_i)||_{\T} + ||\gamma_t^+ (\E_i)||_{\T} \right)$$
which implies, via Theorem \ref{AlonsoTheorem}, that such $\E$ satisfies the bound
$$||\E||_{\textbf{H}(\text{curl},\Omega)} \leq \dfrac{C_{\Omega} \cdot||C^e||_{\T} \cdot \max(C_{\pi}^1, C_{\pi}^2)}{\min(C_0, \widetilde{C_0})} \left( ||\B_i||_{\textbf{H}(\text{curl},\Omega)} + ||\E_i||_{\textbf{H}(\text{curl},\Omega)}\right)$$
i.e. in terms of the Lipschitz character of the domain:
\begin{align}\label{preciseEestimate}
||\E||_{\textbf{H}(\text{curl},\Omega)} \leq \dfrac{(1+\sqrt{2}) (M\;k_1)^2}{\min(C_0, \widetilde{C_0})}||C^e||_{\T} \left( ||\B_i||_{\textbf{H}(\text{curl},\Omega)} + ||\E_i||_{\textbf{H}(\text{curl},\Omega)}\right),
\end{align}
as required. 

\begin{remark}
In the case of the wedge domain $\Omega_{\alpha}$ from Example \ref{ex:wedge}, for small angles $\alpha$ we see that that $\Hcurl$ norm of $\E$ grows like $\alpha^{-2}$.
\end{remark}

\subsection{Alternative weak formulation}
An alternate weak formulation of this problem can be formulated in terms of the field $\textbf{E}' = \textbf{E}-\textbf{E}_i$, the difference between the internal solution and the incident field; this can be done via the method of \cite{NWGK2014}. 
\begin{theorem}[\cite{NWGK2014}, Theorem 3.2] \label{AlternateTheorem}
Let $\E$ denote the weak solution of (\ref{weakformulation}). Then $\E'$ satisfies the estimate
\begin{align} \label{Theorem32estimate}
||\E'||_{\Hcurl} \leq \dfrac{\max \left( k_0 ||\varepsilon-I_{3\times3}||_{L^\infty(\Omega; \mathbb{C}^{3\times3})}, \; k_0^{-1} ||\mu^{-1}-I_{3\times 3}||_{L^\infty(\Omega; \mathbb{C}^{3\times3})}\right)}{\min(C_0, \widetilde{C_0})} ||\E_i ||_{\Hcurl}, 
\end{align}
where $I_{3\times 3}$ denotes the $3\times 3$ identity matrix.
\end{theorem}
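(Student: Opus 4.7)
The plan is to reduce the problem to an equation satisfied by $\E' = \E - \E_i$ and then apply coercivity, exactly as in the proof of Theorem \ref{main1}. First I would substitute $\E = \E' + \E_i$ into the weak formulation \eqref{weakformulation} to obtain
\[ A(\E', \v) = \int_{\partial \Omega} \left( \gamma_t^+ (\B_i) -C^e (\gamma_t^+ (\E_i)) \right) \cdot \overline{\pi_-(\v)} \; \mathrm{d}\sigma - A(\E_i, \v) \quad \forall\, \v \in \Hcurl. \]
The key idea is then to split the material parameters as $\mu^{-1} = I + (\mu^{-1}-I)$ and $\varepsilon = I + (\varepsilon - I)$, which yields a corresponding decomposition $A(\E_i,\v) = A_{\mathrm{vac}}(\E_i,\v) + A_{\mathrm{pert}}(\E_i,\v)$. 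In the perturbation piece, the $L^\infty$ bounds on $\mu^{-1}-I$ and $\varepsilon -I$ will produce the numerator of \eqref{Theorem32estimate} upon Cauchy--Schwarz.

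The heart of the argument is showing that the vacuum piece exactly cancels the right-hand side data, i.e.\
\[ A_{\mathrm{vac}}(\E_i,\v) = \int_{\partial \Omega} \bigl(\gamma_t^+(\B_i) - C^e(\gamma_t^+(\E_i))\bigr)\cdot \overline{\pi_-(\v)}\; \mathrm{d}\sigma. \]
To prove this, I would use the fact that $\E_i$ and $\B_i$ satisfy the vacuum Maxwell system \eqref{timeharmonicMaxwell} in a neighborhood of $\overline\Omega$, so that $\nabla\times \E_i = ik_0\B_i$ and $\nabla\times \B_i = -ik_0 \E_i$ on $\Omega$. Substituting these identities rewrites the volume part of $A_{\mathrm{vac}}(\E_i,\v)$ as $\int_\Omega (\overline{\v}\cdot\nabla\times \B_i - \overline{\nabla\times \v}\cdot \B_i)\,\mathrm{d}x$, and Green's formula \eqref{supergeneralGreen} converts this into the boundary pairing with $\gamma_t(\B_i)$. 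Since $\E_i$ and $\B_i$ are smooth across $\partial\Omega$, we have $\gamma_t^-(\E_i)=\gamma_t^+(\E_i)$, so the Calderón term in $A_{\mathrm{vac}}$ matches the corresponding term on the right-hand side, and the Green identity produces the remaining $\gamma_t^+(\B_i)$ term. Consequently
\[ A(\E',\v) = -A_{\mathrm{pert}}(\E_i,\v) = -\int_\Omega \left( \tfrac{i}{k_0}\overline{\nabla\times\v}\cdot (\mu^{-1}-I)(\nabla\times \E_i) - ik_0 \overline{\v}\cdot(\varepsilon-I)\E_i\right)\mathrm{d}x. \]

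Finally, I would take $\v = \E'$, apply the coercivity estimate \eqref{coerciveEstimate} on the left, and bound the right-hand side by Cauchy--Schwarz:
\[ \min(C_0,\widetilde{C_0})\,\|\E'\|_{\Hcurl}^2 \le \max\bigl(k_0\|\varepsilon-I\|_{L^\infty},\, k_0^{-1}\|\mu^{-1}-I\|_{L^\infty}\bigr)\,\|\E_i\|_{\Hcurl}\,\|\E'\|_{\Hcurl}, \]
after which dividing through by $\|\E'\|_{\Hcurl}$ gives \eqref{Theorem32estimate}. The main obstacle is the careful bookkeeping of the boundary terms in step two — in particular, verifying the sign conventions in the Green identity \eqref{supergeneralGreen} match those in the definition \eqref{sesquilinearformdef} of $A$, and that the duality pairing $\langle \pi(\overline\v),\gamma_t(\B_i)\rangle$ agrees with the surface integral $\int_{\partial\Omega}\gamma_t^+(\B_i)\cdot \overline{\pi_-(\v)}\,\mathrm{d}\sigma$. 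Since $\E_i$ is smooth at $\partial\Omega$, these traces are well-defined in the classical sense and the identification is straightforward; the remainder of the argument is essentially the Lax--Milgram-type estimate already carried out in the previous subsection.
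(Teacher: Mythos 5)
The paper itself gives no proof of this statement --- it is imported from \cite{NWGK2014}, Theorem 3.2, with only the pointer that the $\E'$-formulation ``can be done via the method of \cite{NWGK2014}'' --- and your argument is essentially that method, so I have no different paper proof to weigh it against. Your outline is correct and yields exactly the stated constant: writing $\E=\E'+\E_i$, using that $(\E_i,\B_i)$ solve the vacuum system \eqref{timeharmonicMaxwell} in $\Omega$ and are smooth across $\partial\Omega$ (so $\gamma_t^-(\E_i)=\gamma_t^+(\E_i)$), the vacuum part of $A(\E_i,\cdot)$ reproduces the boundary data through the Green identity \eqref{supergeneralGreen}, leaving $A(\E',\v)=-A_{\mathrm{pert}}(\E_i,\v)$, after which coercivity \eqref{coerciveEstimate} and Cauchy--Schwarz give \eqref{Theorem32estimate}; this also explains the paper's remark that neither $\|C^e\|_{\T}$ nor $M$ enters. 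For the sign bookkeeping you rightly flag as the only delicate step, a quick consistency check is the case $\varepsilon=\mu=I_{3\times3}$: Lax--Milgram uniqueness then forces $\E=\E_i$, which is precisely your cancellation identity $A_{\mathrm{vac}}(\E_i,\v)=$ right-hand side of \eqref{weakformulation}, so the identity must hold in whichever consistent trace/orientation convention underlies \eqref{weakformulation}.
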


Notice in particular in (\ref{Theorem32estimate}) that there is no contribution from the exterior Calder\'on operator, nor from the Lipschitz constant $M$.

\section{Estimates for the scattered fields} \label{sec:scatteredestimates}
Now that we have quantitative control over the trace mappings, we can obtain precise dependence on the surface of the scattered fields as well. This is particularly useful in cloaking, where one is interested in controlling the scattered fields as much as possible \cite{GKYU2009}. The following estimates have been developed previously; we are now able to make the constants explicit. In particular, we show the constants involve the Lipschitz constant to the power one. 
\begin{theorem}\cite{NWGK2014a}
There exist constants $C_1, C_2>0$ such that the scattered fields on the surface $\partial \Omega$ satisfy
\[ ||\gamma_t^+(\E_s) ||_{\T} \leq C_1 ||\E_i||_{\Hcurl}\]
and
\[||\gamma_t^+(\B_s)||_{\T} \leq C_2 ||\E_i||_{\Hcurl}. \] 
\end{theorem}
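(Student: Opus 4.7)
The strategy is to express the exterior traces of the scattered fields through the transmission conditions (\ref{boundaryconditions}), estimate each resulting surface term by the trace bound of Theorem \ref{AlonsoTheorem}, and then control the interior field via the alternative formulation. The key point is that one should invoke Theorem \ref{AlternateTheorem} rather than Theorem \ref{main1} for the interior estimate, since the former carries no explicit $M$ dependence; this way only the single application of the trace theorem contributes a factor of $M$, producing the promised linear dependence on the Lipschitz constant.

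First, the boundary conditions (\ref{boundaryconditions}) give
\[ \gamma_t^+(\E_s) = \gamma_t^-(\E) - \gamma_t^+(\E_i), \qquad \gamma_t^+(\B_s) = \gamma_t^-(\B) - \gamma_t^+(\B_i). \]
Applying Theorem \ref{AlonsoTheorem} (with $C_\Omega = (1+\sqrt{2}) M k_1$) to each term on the right yields
\[ ||\gamma_t^+(\E_s)||_{\T} \leq C_\Omega \bigl(||\E||_{\Hcurl} + ||\E_i||_{\Hcurl}\bigr), \]
and an analogous inequality for $\B_s$. (One applies the theorem to the restriction of $\E_i$ to $\Omega$.)

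Next I would eliminate the $\B$-quantities in favor of $\E$-quantities via the Maxwell systems. Using (\ref{timeharmonicMaxwell}) for the incident fields, $\nabla\times\E_i = ik_0\B_i$ and $\nabla\times\B_i = -ik_0\E_i$ produce $||\B_i||_{\Hcurl} \leq \sqrt{k_0^{-2}+k_0^{2}}\,||\E_i||_{\Hcurl}$. Using (\ref{timeharmonicMaxwell-interior}) for the internal fields, $\B = (ik_0)^{-1}\mu^{-1}(\nabla\times\E)$ together with $\nabla\times\B = -ik_0\epsilon\E$ gives $||\B||_{\Hcurl} \leq C(\epsilon,\mu,k_0)\,||\E||_{\Hcurl}$, with the constant depending only on $k_0$ and the $L^\infty$ norms of $\epsilon$ and $\mu^{-1}$. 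Finally, Theorem \ref{AlternateTheorem} supplies the $M$-free bound
\[ ||\E||_{\Hcurl} \leq \bigl(1 + K_{\epsilon,\mu,k_0}\bigr)||\E_i||_{\Hcurl}, \]
where $K_{\epsilon,\mu,k_0}$ is the coefficient on the right-hand side of (\ref{Theorem32estimate}). Substituting these into the trace inequalities above yields $C_1$ and $C_2$ as explicit constants linear in $M$.

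The main obstacle is essentially one of bookkeeping: the naive approach of substituting Theorem \ref{main1} for the interior estimate would produce a scaling of order $M^3$ (the $M^2$ from the interior bound combined with the $M$ from the trace application), whereas the $M$-independent bound in Theorem \ref{AlternateTheorem} for the difference field $\E - \E_i$ keeps the final constants genuinely linear in the Lipschitz constant, in agreement with the claim of the theorem.
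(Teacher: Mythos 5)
Your argument is correct and proves the statement with constants linear in $M$, but it takes a partly different route from the paper, and it is worth seeing what each choice buys. For the electric field, the paper does not split $\gamma_t^+(\E_s)=\gamma_t^-(\E)-\gamma_t^+(\E_i)$ and estimate the two traces separately; it notes instead that the transmission condition (\ref{boundaryconditions}) gives $\gamma_t^+(\E_s)=\gamma_t^-(\E-\E_i)=\gamma_t^-(\E')$, applies Theorem \ref{AlonsoTheorem} once to the single difference field, and then invokes Theorem \ref{AlternateTheorem} verbatim, which yields the explicit constant $C_1$ of (\ref{C1def}); your triangle-inequality version needs two applications of the trace theorem plus $\|\E\|_{\Hcurl}\leq\|\E'\|_{\Hcurl}+\|\E_i\|_{\Hcurl}$, so you end up with $C_\Omega(2+K_{\epsilon,\mu,k_0})$ in place of the paper's sharper $C_\Omega K_{\epsilon,\mu,k_0}$ — harmless for the existence statement, but less clean. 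For the magnetic field the divergence is more substantive: the paper never touches the interior Maxwell system here, but simply uses the definition (\ref{exteriorCalderondef}) of the exterior Calder\'on operator, $\gamma_t^+(\B_s)=C^e(\gamma_t^+(\E_s))$, together with its boundedness from Theorem \ref{Calderonproperties}, giving $C_2=\|C^e\|_{\T}\,C_1$ in one line; your route through $\gamma_t^+(\B_s)=\gamma_t^-(\B)-\gamma_t^+(\B_i)$ and the conversion of $\B$-quantities into $\E$-quantities via (\ref{timeharmonicMaxwell}) and (\ref{timeharmonicMaxwell-interior}) is also valid (it needs $\mu^{-1}\in L^\infty(\Omega)$, which the paper assumes implicitly through $\widetilde{C_0}$ and Theorem \ref{AlternateTheorem}), and it buys a bound free of $\|C^e\|_{\T}$ at the price of extra factors involving $k_0$, $\|\epsilon\|_{L^\infty}$ and $\|\mu^{-1}\|_{L^\infty}$, again still linear in $M$. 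Your closing observation — that using Theorem \ref{main1} instead of Theorem \ref{AlternateTheorem} would inflate the $M$-dependence — is exactly the reason the paper's proof is built on the difference field $\E'$.
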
 
\begin{proof}
From Theorems \ref{AlonsoTheorem}, \ref{AlternateTheorem}, and \ref{Calderonproperties}, we may conclude that these constants are given as 
\begin{align} \label{C1def}
\begin{split}
C_1 &= (1+\sqrt{2})M k_1 \cdot \dfrac{\max \left( k_0 ||\varepsilon-I_{3\times3}||_{L^\infty(\Omega; \mathbb{C}^{3\times 3})},\;  k_0^{-1} ||\mu^{-1}-I_{3\times3}||_{L^\infty(\Omega; \mathbb{C}^{3\times3})}\right)}{\min (C_0, \widetilde{C_0})},  \\ C_2 &= ||C^e||_{\T}\; C_1. 
\end{split}
\end{align}
This is because
\begin{align*}
||&\gamma_t^+(\E_s)||_{\T} =||\gamma_t^-(\E')||_{\T} \leq  (1+\sqrt{2})M  k_1 ||\E'||_{\Hcurl} \leq \\ &\tiny{(1+\sqrt{2})M k_1 \dfrac{\max \left( k_0 ||\varepsilon-I_{3\times3}||_{L^\infty(\Omega; \mathbb{C}^{3\times3})}, \; k_0^{-1} ||\mu^{-1}-I_{3\times 3}||_{L^\infty(\Omega; \mathbb{C}^{3\times3})}\right)}{\min(C_0, \widetilde{C_0})} ||\E_i ||_{\Hcurl}}
 \end{align*}
by Theorem \ref{AlonsoTheorem} and Theorem \ref{AlternateTheorem}. This estimate for $\gamma_t^+(\B_s)$ follows from the fact that
\[ ||\gamma_t^+ (\B_s)||_{\T} = ||C^e(\gamma_t^+(\E_s))||_{\T}\]
and the bound from the Calder\'on operator from Theorem \ref{Calderonproperties}. 
\end{proof} 

Now, there is a well-known integral representation of the scattered field \cite{NWGK2014a}:
\begin{align}\label{integralrep}
\begin{split}
\E_s(x)=\dfrac{\iu}{k_0} \nabla \times \left[ \nabla \times \int_{\partial \Omega} I_{3\times3} g(|x-y|) C^e(\gamma_t^+(\E_s))(y) \; \mathrm{d} \sigma(y) \right] + \\ \nabla \times \int_{\partial \Omega} I_{3\times3} g(|x-y|) \gamma_t^+(\E_s)(y) \; \mathrm{d} \sigma(y), \qquad x\in \Omega_e
\end{split}
\end{align}
where $g(z) = e^{\iu k_0z} / (4\pi z)$ is the fundamental solution of the Helmholtz equation. From this representation coupled with Proposition 4.2 in \cite{NWGK2014}, we obtain the following explicit estimate:

\begin{theorem}
Let $\Omega_s\subset \Omega_e$ be a bounded domain. Then, there exists a constant $C_3>0$ such that 
\begin{align} \label{scatteredestimate1}
||\E_s||_{\left( L^2(\Omega_s)\right)^3} \leq C_3 ||\E_i||_{\Hcurl}
\end{align}
In particular, $C_3= C_1 \cdot \widetilde{C_1}$, where $C_1$ is given in (\ref{C1def})
and $\widetilde{C_1}$ is independent of the Lipschitz constant $M$.
\end{theorem}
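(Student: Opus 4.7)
The plan is to chain the integral representation \eqref{integralrep} with the trace control established just before, so that everything is ultimately measured against $\|\E_i\|_{\Hcurl}$.

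First, I would apply Proposition 4.2 of \cite{NWGK2014} directly to \eqref{integralrep}. Since $\Omega_s$ is a bounded subdomain of $\Omega_e$, the kernel $g(|x-y|)$ and all of its derivatives needed to make sense of the $\nabla\times\nabla\times$ operator are uniformly bounded for $x\in \Omega_s$ and $y\in\partial \Omega$. That proposition then yields a constant $\widetilde{C_1}$, depending only on $\Omega_s$, on $\partial\Omega$ as a set, and on $k_0$, such that
\[
\|\E_s\|_{(L^2(\Omega_s))^3}
\le \widetilde{C_1}\Bigl( \|\gamma_t^+(\E_s)\|_{\T} + \|C^e(\gamma_t^+(\E_s))\|_{\T} \Bigr),
\]
the two terms bounding respectively the single-layer piece and the double-curl piece of \eqref{integralrep} as continuous maps from $\T$ into $(L^2(\Omega_s))^3$.

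Next I would absorb the Calder\'on factor using part (3) of Theorem \ref{Calderonproperties}, namely $\|C^e(\gamma_t^+(\E_s))\|_{\T}\le \|C^e\|_{\T}\,\|\gamma_t^+(\E_s)\|_{\T}$, to collapse the right-hand side to $\widetilde{C_1}(1+\|C^e\|_{\T})\,\|\gamma_t^+(\E_s)\|_{\T}$; the factor $(1+\|C^e\|_{\T})$ is harmlessly folded into $\widetilde{C_1}$. Finally, the preceding theorem gives $\|\gamma_t^+(\E_s)\|_{\T}\le C_1\,\|\E_i\|_{\Hcurl}$, and composing the two inequalities produces \eqref{scatteredestimate1} with $C_3=C_1\,\widetilde{C_1}$, which is exactly the asserted factorization.

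The main obstacle is verifying that $\widetilde{C_1}$ really carries no hidden dependence on the Lipschitz constant $M$. The one place $M$ could creep in is through the surface measure $\mathrm{d}\sigma$ on $\partial \Omega$: in bounding the $L^2(\Omega_s)$ norms of the layer-type potentials in \eqref{integralrep} one must control $\int_{\partial\Omega}|g(|x-y|)|^2\,\mathrm{d}\sigma(y)$ and the analogous integral for the second-order kernel uniformly in $x\in\Omega_s$. These bounds are governed by the surface area of $\partial\Omega$ and by $\mathrm{dist}(\Omega_s,\partial\Omega)>0$, both of which are fixed geometric features of $\Omega$ and $\Omega_s$ that do not deteriorate when one rescales the Lipschitz constant. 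Once this observation is made rigorous, $\widetilde{C_1}$ is genuinely $M$-independent, and the entire $M$-dependence of $C_3$ is confined to the factor $C_1$, which by \eqref{C1def} is linear in $M$. This matches the declared structure of the constant and completes the proof plan.
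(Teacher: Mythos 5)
Your proposal is correct and follows essentially the same route as the paper: estimate the two layer potentials in the representation (\ref{integralrep}) via Proposition 4.2 of \cite{NWGK2014}, absorb the Calder\'on operator through Theorem \ref{Calderonproperties}, and then apply the preceding theorem's bound $\|\gamma_t^+(\E_s)\|_{\T}\leq C_1\|\E_i\|_{\Hcurl}$ to obtain $C_3=C_1\,\widetilde{C_1}$. The only cosmetic difference is the bookkeeping of $\widetilde{C_1}$: the paper records it as $\bigl|\bigl|\,\|C^e\|_{\T}\,\|F_1(x,\cdot)\|_{\T'}+\|F_2(x,\cdot)\|_{\T'}\,\bigr|\bigr|_{(L^2(\Omega_s))^3}$ (with the $M$-independence attributed to the dual norm via the isomorphism $i_{\pi}^{\ast}$), whereas you fold a factor $(1+\|C^e\|_{\T})$ into an operator-norm constant.
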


\begin{proof}
Let 
\[F_1(x,y) = \dfrac{\iu}{k_0} \nabla \times \left( \nabla \times I_{3\times3} g(|x-y|)\right) \; \text{ and } \; F_2(x,y) = \nabla \times I_{3\times 3}g(|x-y|). \]
From Proposition 4.2 in \cite{NWGK2014}, by tracking constants and using the representation (\ref{integralrep}) we find that
 and 
\[\widetilde{C_1} = \left| \left| ||C^e||_{\T} ||F_1(x, \cdot)||_{\T^{'}} +||F_2(x,\cdot)||_{\T^{'}} \right|\right|_{\left(L^2(\Omega_s)\right)^3}
\]
\end{proof}

Notice also that the dual norm $||\cdot ||_{\T^{'}}$ can be written in terms of the $H^{-1/2}(\text{curl}_{\partial\Omega},\partial\Omega)$ norm using the isomorphism $i_{\pi}^{\ast}$ from Section \ref{sec:dualofT}. The norms of $F_1$ and $F_2$ can be estimated at least outside the smallest circumscribing sphere using spherical harmonics; see \cite{NWGK2014}, Section 4.4. Additionally, in estimating the norms with the isomorphism $i^{\ast}$, there is no dependence on the Lipschitz character of the domain due to how $i_{\pi}^{\ast}$ is constructed.

\section{conclusion}
We have developed a number of quantitative trace estimates in terms of the Lipschitz character of the underlying surface. A weak formulation of the Maxwell system is posed and solved via the Lax-Milgram Theorem, and precise bounds for boundedness and coercivity of the associated sesquilinear form are obtained. Finally, we show that solutions are controlled in $\Hcurl$ by the incident fields and obtain estimates in $\Hcurl$ for the weak solution as well as the scattered fields, which are in terms of constants from continuity of particular trace operators, as well as the material parameters.

\section*{Acknowledgments}
E. S. acknowledges partial support from an American-Scandinavian grant in 2019-2020, and would like to thank FOI for their hospitality where part of this work was completed.

\vspace*{1cm}

\appendix
\section{Function Spaces} \label{Appendix-functionspaces}
We begin by recalling the basic notions related to fractional Sobolev spaces on bounded, Lipschitz domains. First, if $m \geq 0$ is an integer, recall that $H^m(\Omega)$ denotes the space of distributions on $\Omega$ such that $D^{\alpha}u \in L^2(\Omega)$ for all $|\alpha| \leq m$. When endowed with the norm
$$||u||_{H^m(\Omega)} \coloneqq \left( \sum_{|\alpha| \leq m} \int_{\Omega} |D^\alpha u(x)|^2 dx \right)^{1/2}$$
the space $H^m(\Omega)$ becomes a Banach space. For $s = m + \sigma$ not necessarily an integer, with $0< \sigma< 1$, the spaces $H^s(\Omega)$ can be defined via interpolation (see e.g. \cite{Mclean}) with the norm
$$||u||_{H^s(\Omega)} = \left( ||u||_{H^m(\Omega)}^2+ \sum_{|\alpha|=m} \int_{\Omega} \int_{\Omega} \dfrac{|D^{\alpha}u(x) - D^{\alpha}u(y)|^2}{|x-y|^{n+2\sigma}}dxdy \right)^{1/2}$$ in $\mathbb{R}^n$. Next define the trace space $H^s(\partial \Omega)$ for $0<s<1$ as the space of distributions on $\partial \Omega$ such that
$$\int_{\partial \Omega} |u(x)|^2 d\sigma + \int_{\partial \Omega} \int_{\partial \Omega} \dfrac{|u(x)-u(y)|^2}{|x-y|^{n-1+2s}} d\sigma_x d\sigma_y < +\infty$$
with the norm
$$||u||_{H^s(\partial \Omega)} = \left( \int_{\partial \Omega} |u(x)|^2 d\sigma +\int_{\partial \Omega} \int_{\partial \Omega} \dfrac{|u(x)-u(y)|^2}{|x-y|^{n-1+2s}} d\sigma_x d\sigma_y \right)^{1/2}$$

We define the space
\begin{align*}
\textbf{H}(\text{div}, \Omega) \coloneqq \left\{ U \in \left(L^2 (\Omega)\right)^3: \nabla \cdot U \in L^2 (\Omega) \right\}
\end{align*}
Next we define
$$\textbf{H}^0(\text{div}, \Omega) \coloneqq \text{completion of } \left(C_0^\infty(\Omega)\right)^3 \; \text{ in the } \textbf{H}(\text{div}, \Omega) \; \text{ norm } $$
as well as
\begin{align*}
\textbf{H}(\text{curl}, \Omega) = \left\{ U \in \left(L^2 (\Omega)\right)^3 : \nabla \times U \in \left(L^2 (\Omega)\right)^3 \right\}
\end{align*}
and
$$
\textbf{H}^0(\text{curl}, \Omega) = \text{ completion of } \left(C_0^\infty(\Omega)\right)^3 \text{ in the $\textbf{H}(\text{curl}, \Omega)$ norm}
$$
where the norm in $\textbf{H}(\text{curl}, \Omega)$ and $\textbf{H}(\text{div},\Omega)$ is the graph norm. We recall the following key lemma:
\begin{lemma}[\cite{Lions}] \label{tracethm}
Let $\Omega \subset \RR^3$ be a bounded Lipschitz domain, and let $\nu$ denote the outer unit normal to the boundary $\partial \Omega$.
\begin{enumerate}
\item The following Meyers-Serrin type theorem holds:
$$\textbf{H}(\text{curl}, \Omega) = \overline{ \left(C^\infty(\overline{\Omega})\right)^3 }^{||\cdot ||_{\textbf{H}(\text{curl}, \Omega)}}$$

\item $\textbf{H}^0(\text{curl}, \Omega) = \left\{ U \in \textbf{H}(\text{curl}, \Omega) : (U, \nabla \times \phi)_{L^2} = (\nabla \times U, \phi)_{L^2} \; \forall \; \phi \in \left(C^\infty(\overline{\Omega})\right)^3 \right\}$
\item The space $\textbf{H}^0(\text{curl}, \Omega)$ can also be characterized by those functions in $\textbf{H}(\text{curl}, \Omega)$ having tangential trace zero:
$$\textbf{H}^0(\text{curl}, \Omega) = \left\{ U \in \textbf{H}(\text{curl}, \Omega): \nu \times U = 0 \; \text{ on } \partial \Omega \right\}$$
\end{enumerate}
\end{lemma}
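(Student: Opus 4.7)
The plan is to prove the three assertions in order, using each to set up the next: part (1) provides smooth approximations, part (2) recasts the zero-boundary condition as an integration-by-parts identity, and part (3) identifies that condition with vanishing of the tangential trace.

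For part (1), the Meyers--Serrin type density, I would adapt the classical mollification-and-partition-of-unity argument to the curl operator. Cover $\Omega$ by an increasing family of relatively compact interior subsets together with a collar near $\partial\Omega$, and fix a smooth partition of unity $\{\chi_k\}$ subordinate to this cover. For each piece $\chi_k U$, apply a mollifier $\rho_{\varepsilon_k} *$ with $\varepsilon_k$ chosen small enough that the support remains inside $\Omega$ (near the boundary, using the uniform interior cone property of a Lipschitz domain, one shifts the mollifier along an inward cone direction before convolving, as in Ne\v{c}as's construction). The key computation is that $\nabla\times(\rho_\varepsilon * V) = \rho_\varepsilon * (\nabla\times V)$ on any set where both sides are defined, so $\rho_{\varepsilon_k}*(\chi_k U) \to \chi_k U$ in the graph norm. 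Summing and using $\sum_k \chi_k \equiv 1$, a diagonal choice of the $\varepsilon_k$'s gives a smooth field on $\overline\Omega$ within arbitrary graph-norm tolerance of $U$.

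For part (2), one inclusion is immediate: if $U \in \mathbf{H}^0(\mathrm{curl},\Omega)$, pick $\phi_n \in (C_0^\infty(\Omega))^3$ with $\phi_n \to U$ in the graph norm; each $\phi_n$ has compact support in $\Omega$, so ordinary integration by parts against any test field $\phi \in (C^\infty(\overline\Omega))^3$ yields no boundary term, and passing to the limit gives $(U,\nabla\times\phi)_{L^2} = (\nabla\times U,\phi)_{L^2}$. For the reverse inclusion, use part (1) to produce $U_n \in (C^\infty(\overline\Omega))^3$ with $U_n \to U$ in the graph norm; multiply by interior cutoffs $\zeta_j$ equal to $1$ on $\Omega_j \subset\subset \Omega$ and vanishing near $\partial\Omega$, then mollify, obtaining a sequence in $(C_0^\infty(\Omega))^3$. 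The hypothesized integration-by-parts identity guarantees that the error introduced by $[\nabla\times,\zeta_j]$ near $\partial\Omega$ vanishes in the limit, so $U$ lies in $\mathbf{H}^0(\mathrm{curl},\Omega)$.

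For part (3), one direction is immediate from the continuity of the tangential trace $\gamma_t : \mathbf{H}(\mathrm{curl},\Omega) \to \mathbf{T}(\partial\Omega)$ (Theorem \ref{AlonsoTheorem}): every $\phi_n \in (C_0^\infty(\Omega))^3$ has $\nu\times \phi_n = 0$, so the limit $U$ does as well. For the converse, suppose $U \in \mathbf{H}(\mathrm{curl},\Omega)$ has $\nu\times U = 0$ on $\partial\Omega$. The Green's formula (\ref{supergeneralGreen}) applied with $\mathbf{v} = \phi \in (C^\infty(\overline\Omega))^3$ reduces to $\int_\Omega (U\cdot \nabla\times \phi - \nabla\times U\cdot \phi)\,\mathrm{d}x = -{}_{\gamma'}\langle \pi(U),\gamma_t(\phi)\rangle_\gamma$; since $\pi(U)$ vanishes whenever $\gamma_t(U)=\nu\times U=0$, the right-hand side is zero, so the hypothesis of part (2) is met and $U \in \mathbf{H}^0(\mathrm{curl},\Omega)$. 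The main obstacle lies in part (1), specifically in the careful treatment of the mollification near the Lipschitz boundary---a direct convolution of $U$ does not stay supported in $\Omega$, so one must rely on the uniform interior cone property and a translated-mollifier argument. Once part (1) is in hand, the remaining two parts are a standard duality-and-approximation exercise.
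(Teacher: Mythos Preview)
The paper does not supply its own proof of this lemma; it is stated in the appendix with a citation to Dautray--Lions \cite{Lions}. So there is no in-paper argument to compare against, and your proposal must be judged on its own.

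Parts (1) and (3) of your sketch are sound. The translated-mollifier argument for (1) is the standard one, and your use of the Green formula \eqref{supergeneralGreen} for (3) is correct once you note that $\pi(U) = -\nu\times\gamma_t(U)$, so $\gamma_t(U)=0$ forces $\pi(U)=0$ and the boundary term drops.

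The gap is in the reverse inclusion of part (2). You plan to cut off by interior functions $\zeta_j$ and then assert that ``the hypothesized integration-by-parts identity guarantees that the error introduced by $[\nabla\times,\zeta_j]$ near $\partial\Omega$ vanishes in the limit.'' This does not follow. The commutator is $\nabla\zeta_j\times U$, supported in a collar of thickness $\delta_j$; its $L^2$ norm is bounded by $\|\nabla\zeta_j\|_{L^\infty}\|U\|_{L^2(\text{collar})}$, with $\|\nabla\zeta_j\|_{L^\infty}\sim\delta_j^{-1}$ and no compensating rate on $\|U\|_{L^2(\text{collar})}$ for a generic $L^2$ field. The integration-by-parts identity gives no pointwise or collar estimate to rescue this. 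The correct route is different: the identity $(U,\nabla\times\phi)_{L^2}=(\nabla\times U,\phi)_{L^2}$ for all $\phi\in(C^\infty(\overline\Omega))^3$ says precisely that the extension $\widetilde U$ of $U$ by zero to $\mathbb{R}^3$ lies in $\textbf{H}(\text{curl},\mathbb{R}^3)$, with $\nabla\times\widetilde U$ equal to the zero-extension of $\nabla\times U$. Once extended, translate $\widetilde U$ along an inward cone direction (available by the Lipschitz hypothesis, exactly as in your part (1)) and mollify with a kernel of smaller radius; the resulting fields lie in $(C_0^\infty(\Omega))^3$ and converge to $U$ in the graph norm. Replace the cutoff step with this extend-then-translate argument and (2) closes.
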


We have a similar result for traces of $\textbf{H}(\text{div}, \Omega)$ functions.

\begin{lemma}[\cite{Lions}] \label{tracetheoremHdiv}
Let $\Omega \subset \mathbb{R}^3$ be a bounded, Lipschitz domain with outer unit normal $\nu$.
\begin{enumerate}
\item The map $\gamma_n$ for $w \in (C^\infty (\overline \Omega))^3$
$$\gamma_n (w) = \left. \nu \cdot w \right|_{\partial \Omega}$$
extends to a continuous linear map from $\textbf{H}(\text{div}, \Omega)$ into $H^{-1/2}(\partial \Omega)$
\item The following characterization holds:
$$\textbf{H}^0(\text{div}, \Omega) = \left\{ V \in \textbf{H}(\text{div}, \Omega): \gamma_n(V) = 0 \right\}$$
\end{enumerate}
\end{lemma}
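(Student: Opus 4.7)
The plan is to establish the bound on smooth fields via Green's identity and then extend by density. For $w\in (C^{\infty}(\overline{\Omega}))^3$ and $\phi\in H^1(\Omega)$, the classical divergence theorem gives
$$\int_{\partial\Omega}(\nu\cdot w)\,\phi\,\mathrm{d}\sigma = \int_{\Omega}(\nabla\cdot w)\,\phi\,\mathrm{d}x + \int_{\Omega} w\cdot\nabla\phi\,\mathrm{d}x,$$
whose right-hand side is bounded by $\|w\|_{\textbf{H}(\text{div},\Omega)}\|\phi\|_{H^1(\Omega)}$ via Cauchy--Schwarz. Given any $g\in H^{1/2}(\partial\Omega)$, Proposition \ref{prop:extension} produces an extension $Eg\in H^1(\Omega)$ with $\|Eg\|_{H^1(\Omega)}\le Mk_1\|g\|_{H^{1/2}(\partial\Omega)}$. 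Testing the identity against $\phi=Eg$ and taking the supremum over $g$ of unit $H^{1/2}$ norm yields
$$\|\gamma_n(w)\|_{H^{-1/2}(\partial\Omega)}\le Mk_1\|w\|_{\textbf{H}(\text{div},\Omega)}.$$
I would then invoke density of $(C^{\infty}(\overline{\Omega}))^3$ in $\textbf{H}(\text{div},\Omega)$ (a Meyers--Serrin type statement proved by the standard partition-of-unity plus mollification argument, analogous to Lemma \ref{tracethm}(1)) to extend $\gamma_n$ uniquely by continuity; Green's identity then persists for all $V\in\textbf{H}(\text{div},\Omega)$ and $\phi\in H^1(\Omega)$.

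\textbf{Part (2), easy inclusion.} If $V_n\in (C_0^{\infty}(\Omega))^3$ converges to $V$ in the graph norm, then $\gamma_n(V_n)=0$ and the continuity established above forces $\gamma_n(V)=0$, so $\textbf{H}^0(\text{div},\Omega)\subseteq\ker(\gamma_n)$.

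\textbf{Part (2), hard inclusion.} Assume $\gamma_n(V)=0$ and let $\widetilde V$ denote the extension of $V$ by zero to $\mathbb{R}^3$. Using the extended Green's identity, for any $\phi\in C_c^{\infty}(\mathbb{R}^3)$,
$$\langle\nabla\cdot\widetilde V,\phi\rangle = -\int_{\Omega} V\cdot\nabla\phi\,\mathrm{d}x = \int_{\Omega}(\nabla\cdot V)\phi\,\mathrm{d}x - \langle\gamma_n(V),\phi|_{\partial\Omega}\rangle = \int_{\mathbb{R}^3}\chi_{\Omega}(\nabla\cdot V)\phi\,\mathrm{d}x,$$
so $\widetilde V\in\textbf{H}(\text{div},\mathbb{R}^3)$ with distributional divergence $\chi_{\Omega}\nabla\cdot V\in L^2(\mathbb{R}^3)$. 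I then cover $\overline\Omega$ as in (\ref{LD}) by $\{\Omega_k\}_{k=0}^N$ with $\overline{\Omega}_0\subset\Omega$ and each boundary patch $\Omega_k$ ($k\ge 1$) equipped with a direction $e_k$ satisfying $-e_k\cdot\nu\ge\cos(\theta_k)>0$ on $\partial\Omega\cap\Omega_k$, choose a subordinate partition $\{\eta_k\}$, and decompose $\widetilde V=\sum_k \eta_k\widetilde V$. The interior piece $\eta_0\widetilde V$ has compact support in $\Omega$, so convolving with a mollifier of sufficiently small radius yields $(C_0^{\infty}(\Omega))^3$ approximations converging in the graph norm. For each boundary piece, set $V_k^h(x)=(\eta_k\widetilde V)(x+he_k)$ with $h>0$ small; the uniform transversality of $e_k$ pushes the support of $V_k^h$ strictly inside $\Omega$, continuity of translation in $\textbf{H}(\text{div},\mathbb{R}^3)$ gives $V_k^h\to\eta_k\widetilde V$ as $h\to 0^+$, and a further mollification with radius less than $\operatorname{dist}(\operatorname{supp}(V_k^h),\partial\Omega)$ lands the approximation in $(C_0^{\infty}(\Omega))^3$. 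A diagonal argument summing the pieces produces a sequence in $(C_0^{\infty}(\Omega))^3$ converging to $V$ in $\textbf{H}(\text{div},\Omega)$, so $V\in\textbf{H}^0(\text{div},\Omega)$.

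\textbf{Main obstacle.} The delicate step is the boundary translation: I must exhibit an inward-pointing direction $e_k$ that shifts $\operatorname{supp}(\eta_k\widetilde V)\cap\overline\Omega$ strictly into $\Omega$ uniformly on $\operatorname{supp}(\eta_k)$. This is precisely what the Lipschitz partition (\ref{LD}) together with the angle bound $-e_k\cdot\nu\ge\cos(\theta_k)>0$ guarantees, and is exactly where the Lipschitz hypothesis enters in an essential way; the remainder of the argument is a routine density-and-mollification package combined with the continuity of $\gamma_n$ established in Part (1).
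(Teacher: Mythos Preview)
The paper does not actually prove this lemma: it is stated in the Appendix as a known result, with a citation to Dautray--Lions \cite{Lions}, and no argument is given. So there is no ``paper's own proof'' to compare against.

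That said, your proof is correct and is essentially the standard textbook argument (as found, e.g., in Dautray--Lions or Girault--Raviart). Part (1) via Green's identity, the extension operator, and density is the canonical route; your use of Proposition~\ref{prop:extension} to make the constant explicit is a nice touch in the spirit of the present paper, though not strictly needed for the bare lemma. Part (2) is also the standard approach: zero-extension lands in $\textbf{H}(\mathrm{div},\mathbb{R}^3)$ precisely when $\gamma_n(V)=0$, and then inward translation along a transversal direction followed by mollification yields the $(C_0^\infty(\Omega))^3$ approximation. You correctly flag the boundary translation as the place where the Lipschitz hypothesis is essential. One small remark: the partition (\ref{LD}) as written in the paper does not explicitly include an interior patch $\Omega_0$ with $\overline{\Omega_0}\subset\Omega$, so you are tacitly augmenting it; this is harmless and routine, but worth a word if you write it up formally.
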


Recall that the space $\textbf{H}^0(\text{curl}, \Omega)$ can be characterized by those functions having tangential trace equal to zero on the boundary of $\Omega$. But this characterization is not always useful because the trace mapping $\gamma_t : \textbf{H} (\text{curl}, \Omega) \to \left(H^{-1/2}(\partial \Omega)\right)^3$ that sends $w \mapsto \left. \nu \times w \right|_{\partial \Omega}$ is not surjective. Thus, we follow \cite{chen2000} and introduce the trace space $Y(\partial \Omega)$ as
\begin{align*}
Y (\partial \Omega) = \left\{ f \in \left( H^{-1/2}(\partial \Omega) \right)^3: \; \exists \; U \in \textbf{H}(\text{curl}, \Omega) \; \text{ such that } \; \gamma_t (U) = f \right\}
\end{align*}
When it is endowed with the norm
$$||f||_{Y(\partial \Omega)} := \inf_{U \in \textbf{H}(\text{curl}, \Omega); \gamma_t (U) = f } ||U||_{\textbf{H}(\text{curl}, \Omega)}$$
then $Y(\partial \Omega)$ becomes a Banach space. In fact, the following is true.
\begin{theorem}[\cite{buffa2002}]
Let $\Omega \subset \mathbb{R}^3$ be a bounded, smooth domain. Then the space $Y(\partial \Omega)$ as above is a Hilbert space. The trace map $\gamma_t: \textbf{H}(\text{curl}, \Omega) \to Y(\partial \Omega)$ is surjective.
\end{theorem}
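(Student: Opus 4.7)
The plan is to identify $Y(\partial \Omega)$ with the quotient $\textbf{H}(\text{curl}, \Omega)/\ker(\gamma_t)$, observing that the norm defining $Y(\partial \Omega)$ is nothing other than the canonical quotient norm; surjectivity of $\gamma_t$ onto $Y(\partial \Omega)$ will then be immediate from the very definition of $Y(\partial \Omega)$. Thus the whole argument reduces to standard Hilbert space bookkeeping once the kernel of $\gamma_t$ is correctly identified.

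First I would observe that by Lemma \ref{tracethm}, item (3), $\ker(\gamma_t) = \textbf{H}^0(\text{curl}, \Omega)$, and that this subspace is closed in $\textbf{H}(\text{curl}, \Omega)$ by its definition as a completion in the graph norm. Hence the quotient $\textbf{H}(\text{curl}, \Omega)/\textbf{H}^0(\text{curl}, \Omega)$ is a Banach space. The induced map $\widetilde{\gamma_t} \colon [U] \mapsto \gamma_t(U)$ is well-defined, linear, and injective, and its image is exactly $Y(\partial \Omega)$, which is just a restatement of how $Y(\partial \Omega)$ was defined. The canonical quotient norm $\|[U]\| = \inf_{V \in \textbf{H}^0(\text{curl},\Omega)} \|U + V\|_{\textbf{H}(\text{curl}, \Omega)}$ coincides with $\|\gamma_t(U)\|_{Y(\partial \Omega)}$, because the sets $\{U + V : V \in \ker(\gamma_t)\}$ and $\{W \in \textbf{H}(\text{curl}, \Omega) : \gamma_t(W) = \gamma_t(U)\}$ agree, so the two infima are taken over the same family.

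To promote this Banach structure to a Hilbert one, I would use that $\textbf{H}(\text{curl}, \Omega)$ is a Hilbert space under the graph inner product $\langle U, V\rangle = (U,V)_{L^2} + (\nabla \times U, \nabla \times V)_{L^2}$, so the orthogonal decomposition $\textbf{H}(\text{curl}, \Omega) = \textbf{H}^0(\text{curl}, \Omega) \oplus \textbf{H}^0(\text{curl}, \Omega)^\perp$ identifies the quotient isometrically with the closed subspace $\textbf{H}^0(\text{curl}, \Omega)^\perp$; the infimum in the quotient norm is realized by the orthogonal projection, giving $\|[U]\| = \|P_\perp U\|_{\textbf{H}(\text{curl}, \Omega)}$. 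Transporting the inner product of $\textbf{H}^0(\text{curl}, \Omega)^\perp$ through the isomorphism $[U] \mapsto \gamma_t(U)$ equips $Y(\partial \Omega)$ with a Hilbert structure compatible with its given norm, and the surjectivity statement is immediate.

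The one delicate point is justifying that $\gamma_t(U) \in (H^{-1/2}(\partial \Omega))^3$ for every $U \in \textbf{H}(\text{curl}, \Omega)$, which is the only place the smoothness hypothesis on $\Omega$ actually enters; for general Lipschitz $\Omega$ one must replace the target $(H^{-1/2}(\partial \Omega))^3$ by the more intricate tangential trace spaces $\textbf{T}(\partial \Omega)$ or $V_\gamma$ considered earlier in the paper. Once the correct codomain is fixed, the functional-analytic portion of the argument is the same.
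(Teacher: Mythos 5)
Your argument is correct, but note that the paper does not actually prove this statement at all: it is quoted verbatim from \cite{buffa2002} as a known result, so there is no in-house proof to compare against. Your route is the natural self-contained one for the statement \emph{as formulated here}: since $Y(\partial \Omega)$ is defined extrinsically as the image of $\gamma_t$ inside $\left(H^{-1/2}(\partial \Omega)\right)^3$ with the infimum norm, surjectivity is indeed tautological, and the Hilbert structure follows from identifying the $Y$-norm with the quotient norm on $\textbf{H}(\text{curl},\Omega)/\textbf{H}^0(\text{curl},\Omega)$ (using Lemma \ref{tracethm}(3) for $\ker \gamma_t$ and closedness of $\textbf{H}^0(\text{curl},\Omega)$) and then with the norm of the orthogonal complement $\textbf{H}^0(\text{curl},\Omega)^{\perp}$ in the graph inner product. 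What this quotient argument does not capture, and what constitutes the real content of the cited theorem in \cite{buffa2002}, is the intrinsic description of the trace space: there $Y(\partial \Omega)$ is characterized as $H^{-1/2}(\mathrm{div},\partial \Omega)$ (cf.\ Proposition \ref{propy1}) and surjectivity onto that intrinsically defined space, with a bounded right inverse, is the nontrivial part; your proof buys completeness and the inner product cheaply but says nothing about which distributions on $\partial \Omega$ actually arise as tangential traces. One small correction to your closing remark: the continuity of $\gamma_t:\textbf{H}(\text{curl},\Omega)\to \left(H^{-1/2}(\partial \Omega)\right)^3$ holds on bounded Lipschitz domains as well (it follows from Green's formula and the $H^1\to H^{1/2}$ trace), so smoothness is not needed at that step; it is needed only for the identification of $Y(\partial \Omega)$ with $H^{-1/2}(\mathrm{div},\partial \Omega)$, which on Lipschitz domains must be replaced by the spaces $\textbf{T}(\partial \Omega)$ or $H^{-1/2}(\mathrm{div}_{\partial \Omega},\partial \Omega)$ discussed in the paper.
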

Even if $\Omega$ is Lipschitz a similar result is true;  the proof is more involved, but the space $Y(\partial \Omega)$ can be characterized fully. For smooth domains the space $Y(\partial \Omega)$ has a nice characterization. 
To this end we must introduce a few function spaces on $\partial \Omega$. Define

\begin{align} \label{eq_L_t_space}
L^2_t (\partial \Omega) := \left\{ U \in \left(L^2 (\Omega)\right)^3: \nu \cdot U = 0 \; \text{ a.e. on } \partial \Omega \right\}
\end{align}
and
\begin{align*}
H_t^{-1/2}(\partial \Omega) := \left\{ f \in \left( H^{-1/2}(\partial \Omega) \right)^3: f \cdot \nu = 0 \; \text{a.e. on } \partial \Omega \right\}
\end{align*}
Then for smooth $\Omega$ the following holds:

\begin{proposition}[\cite{chen2000}] \label{propy1}
If $\Omega \subset \RR^3$ is a smooth, bounded domain, then
\begin{align} \label{Hminusonehalfdivdef}
Y(\partial \Omega) = \left\{ U \in H_t^{-1/2}(\partial \Omega): \nabla_{\partial \Omega} \cdot U \in H^{-1/2}(\partial \Omega) \right\}
\end{align}
The latter space is denoted $H^{-1/2}(\text{div}, \partial \Omega)$.
\end{proposition}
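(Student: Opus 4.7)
The plan is to prove the two set inclusions in $Y(\partial\Omega) = H^{-1/2}(\text{div},\partial\Omega)$ separately. The forward inclusion will follow from standard trace theorems in $\textbf{H}(\text{div},\Omega)$, while the reverse inclusion requires an explicit construction of a lifting and is the main technical obstacle.

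For the forward inclusion, let $U \in \textbf{H}(\text{curl},\Omega)$ and put $f = \gamma_t(U)$. Density of $(C^\infty(\overline{\Omega}))^3$ in $\textbf{H}(\text{curl},\Omega)$ from Lemma \ref{tracethm} extends $\gamma_t$ continuously, placing $f$ in $H_t^{-1/2}(\partial\Omega)$. The key identity
\[ \nabla_{\partial\Omega}\cdot(\nu\times U) = -\nu\cdot(\nabla\times U), \]
valid pointwise for smooth $U$ and then extended by density, shows that the surface divergence of $f$ equals $-\gamma_n(\nabla\times U)$. Since $\nabla\times U \in (L^2(\Omega))^3$ with $\nabla\cdot(\nabla\times U) = 0$, we have $\nabla\times U \in \textbf{H}(\text{div},\Omega)$, and Lemma \ref{tracetheoremHdiv} yields $\gamma_n(\nabla\times U) \in H^{-1/2}(\partial\Omega)$ with continuous dependence. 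Hence $\nabla_{\partial\Omega}\cdot f \in H^{-1/2}(\partial\Omega)$, so $f$ belongs to the right-hand side.

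For the reverse inclusion, given $f \in H_t^{-1/2}(\partial\Omega)$ with $\nabla_{\partial\Omega}\cdot f \in H^{-1/2}(\partial\Omega)$, I construct $U \in \textbf{H}(\text{curl},\Omega)$ with $\gamma_t(U) = f$. The smoothness of $\partial\Omega$ allows a Hodge-type decomposition $f = \nabla_{\partial\Omega} q + \nabla_{\partial\Omega}\times p$ for scalar potentials $q,p$ on the closed manifold $\partial\Omega$. Applying $\nabla_{\partial\Omega}\cdot$ yields $\Delta_{\partial\Omega} q = \nabla_{\partial\Omega}\cdot f \in H^{-1/2}(\partial\Omega)$, so full elliptic regularity for the Laplace-Beltrami operator on a smooth compact manifold gives $q \in H^{3/2}(\partial\Omega)$, and a parallel argument places $p \in H^{1/2}(\partial\Omega)$. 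Let $Q \in H^2(\Omega)$ and $P \in H^1(\Omega)$ be extensions of $q$ and $p$ into the interior, available by the standard trace theorems on smooth domains. Setting $U := \nabla Q + \nabla\times(P\widetilde{\nu})$, where $\widetilde{\nu}$ is any smooth extension of $\nu$ to a tubular neighborhood of $\partial\Omega$, a direct tangential-trace calculation recovers $f$, and $U \in \textbf{H}(\text{curl},\Omega)$ since $\nabla\times\nabla Q = 0$ and the second term is the curl of an $H^1$ vector field. Continuity of every step produces a bounded right inverse to $\gamma_t$.

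The main obstacle is precisely this lifting: one must simultaneously realize the prescribed low-regularity tangential trace and guarantee that the resulting $U$ has its curl in $L^2(\Omega)$. Smoothness of $\partial\Omega$ is essential, because both the Hodge decomposition with honest scalar potentials and the elliptic regularity estimate $\Delta_{\partial\Omega}^{-1}:H^{-1/2}\to H^{3/2}$ fail in the merely Lipschitz setting; handling that more general case is precisely what forces the introduction of the abstract spaces $V_\gamma$ and $V_\pi$ of \cite{buffa2002} used elsewhere in the paper.
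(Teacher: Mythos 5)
The paper offers no proof of this proposition---it is quoted from \cite{chen2000}---so your attempt is judged against the standard lifting argument, whose outline you correctly identify. Your forward inclusion is fine: $\nabla_{\partial\Omega}\cdot(\nu\times U)=-\nu\cdot(\nabla\times U)$ plus the normal-trace theorem for $\textbf{H}(\text{div},\Omega)$ (Lemma \ref{tracetheoremHdiv}) does the job. The genuine gap is in the reverse inclusion, in two places. First, with the paper's convention $\gamma_t(U)=\nu\times U|_{\partial\Omega}$, your lifting produces the wrong trace: for $Q\in H^2(\Omega)$ extending $q$ one has $\nu\times\nabla Q|_{\partial\Omega}=\nu\times\nabla_{\partial\Omega}q$, i.e.\ the \emph{rotated} gradient (a vector-curl--type field), not the component $\nabla_{\partial\Omega}q$ of your decomposition; symmetrically, choosing $\widetilde{\nu}=\nabla d$ ($d$ the signed distance, so $\nabla\times\widetilde{\nu}=0$), one finds $\nu\times\bigl(\nabla\times(P\widetilde{\nu})\bigr)|_{\partial\Omega}=\nabla_{\partial\Omega}p$, not the vector curl of $p$. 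So ``a direct tangential-trace calculation recovers $f$'' is false as written: each lifting is attached to the wrong potential. Second, and independently, $\nabla\times(P\widetilde{\nu})$ with $P\in H^1(\Omega)$ only is in general \emph{not} in $\Hcurl$: its curl involves second derivatives of $P$, and being the curl of an $H^1$ field only yields an $L^2$, divergence-free field, not one with square-integrable curl (even the pointwise trace computation above is not justified at this regularity).

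Both defects are repaired simultaneously by swapping the pairing, which is the classical construction: lift the gradient part, whose potential has the better regularity $q\in H^{3/2}(\partial\Omega)$ (from $\Delta_{\partial\Omega}q=\nabla_{\partial\Omega}\cdot f\in H^{-1/2}$), by $U_2=\nabla\times(Q\widetilde{\nu})$ with $Q\in H^2(\Omega)$, so that $U_2\in (H^1(\Omega))^3$, $\nabla\times U_2\in (L^2(\Omega))^3$, and $\nu\times U_2|_{\partial\Omega}=\nabla_{\partial\Omega}q$; and lift the vector-curl part, whose potential is only $p\in H^{1/2}(\partial\Omega)$, by the curl-free field $U_1=\nabla P$ with $P\in H^1(\Omega)$, whose rotated trace is exactly $\nu\times\nabla_{\partial\Omega}p$. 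You should also account for the finite-dimensional space of harmonic tangential fields in the Hodge decomposition when $\partial\Omega$ is not simply connected (e.g.\ a torus); this is harmless, since such fields are smooth and lift easily, but it must be said. With these corrections your strategy coincides with the standard proof underlying \cite{chen2000}.
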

Recall also that on a smooth domain, the space $TH^{1/2}(\partial \Omega)$ is defined by\footnote{This is one of five equivalent formulations; see \cite{buffa2002}.}
\begin{align} \label{THonehalf}
TH^{1/2}(\partial \Omega) = \left\{ w: \partial \Omega \to \mathbb{R}^3: w \in \left( H^{1/2}(\partial \Omega)\right)^3, \; w \cdot \nu = 0 \right\}
\end{align}

Now we need to define surface differential operators when $\Omega$ is Lipschitz. This can be done by using local coordinates as in \cite{buffa2002}, and the resulting operators $\nabla_{\partial \Omega}: H^1(\partial \Omega) \to L^2_t(\partial \Omega)$ and $\nabla_{\partial \Omega} \times :H^1 (\partial \Omega) \to L^2_t(\partial \Omega)$ are linear and continuous, with adjoints $\nabla_{\partial \Omega} \cdot : L^2_t(\partial \Omega) \to H^{-1}(\partial \Omega)$ and $\text{curl}_{\partial \Omega}: L^2_t(\partial \Omega) \to H^{-1}(\partial \Omega)$, respectively.

Suppose now that $\Omega$ is a Lipschitz domain. In this case there is not such a nice characterization of the space $Y(\partial \Omega)$; however, there is a characterization which is due to L. Tartar \cite{Tartar}. In fact, we define the Tartar trace space on a bounded, Lipschitz domain $\Omega$ by 
\begin{align}\label{eq:def_T-trace_space}
 \textbf{T}(\partial \Omega) &=
 \left\{ \xi \in {\left( H^{-1/2}(\partial \Omega) \right)^3} : \exists \; \eta \in H^{-1/2}(\partial \Omega) \; s.t. \; \forall \; \phi \in H^2 (\Omega), \; \right.\nonumber\\
 &\qquad \left. {} \langle \xi, \gamma(\nabla \phi) \rangle_{H^{1/2}(\partial \Omega)} = \langle \eta, \gamma(\phi) \rangle_{H^{1/2}(\partial \Omega)} \right\}
\end{align}
where $\gamma$ denotes the scalar trace mapping $\gamma: f \mapsto \left. f \right|_{\partial \Omega}$. Then Tartar showed that $\gamma_t: \textbf{H}(\text{curl}, \Omega) \to \textbf{T}(\partial \Omega)$ is surjective.

\end{document}